\documentclass[oneside]{amsart}
\usepackage{graphicx} 
\usepackage{amsfonts}
\usepackage{floatrow}
\usepackage{url}
\usepackage{amssymb, amsfonts, amsthm, amsmath,mathdots}
\usepackage{xfrac}
\usepackage{enumerate}
\usepackage{verbatim}
\usepackage[all]{xy}
\usepackage[utf8]{inputenc}
\usepackage{bbm}
\usepackage{mathrsfs,mathtools}
\usepackage{listings}
\usepackage{breqn}
\usepackage{textcomp}
\usepackage{moreverb}
\usepackage{sverb}
\usepackage{algorithm,algpseudocode,algorithmicx}

\usepackage{todonotes}

\usepackage{natbib}

\renewenvironment{thebibliography}[1]{
  \begin{oldthebibliography}{#1}
    \setlength{\itemsep}{0.5em}
    \setlength{\parskip}{0em}
}
{
  \end{oldthebibliography}
}

\usepackage[]{xcolor}
\usepackage[colorlinks=true, allcolors=blue,linkcolor=blue,urlcolor=black,citecolor=blue,backref=page]{hyperref}

\usepackage{cleveref}

\usepackage{tikz,pgfplots}
\usepackage{tikz-cd}
\usetikzlibrary{decorations.markings}

\newtheorem{theorem}{Theorem}

\newtheorem{lemma}[theorem]{Lemma}
\newtheorem{corollary}[theorem]{Corollary}
\newtheorem{proposition}[theorem]{Proposition}

\theoremstyle{definition}
\newtheorem{conjecture}[theorem]{Conjecture}
\newtheorem{definition}[theorem]{Definition}

\newtheorem{examplex}[theorem]{Example}
\newenvironment{example}
{\pushQED{\qed}\examplex}
{\popQED\endexamplex}

\newtheorem{remarkx}[theorem]{Remark}
\newenvironment{remark}
{\pushQED{\qed}\remarkx}
{\popQED\endremarkx}

\newtheoremstyle{citing}
{}
{}
{\itshape}
{}
{\bfseries}
{\textbf{.}}
{.5em}
{\thmnote{#3}}
{\theoremstyle{citing}
}

\DeclareMathOperator{\GL}{GL}

\DeclareMathOperator{\im}{im}

\renewcommand{\d}{\mathrm{d}}

\newcommand{\cA}{\mathcal{A}}

\newcommand{\cM}{\mathcal{M}}

\newcommand{\cP}{\mathcal{P}}

\newcommand{\QQ}{\mathbb{Q}}

\def\co{\colon\thinspace} 

\newcommand{\sprod}[1]{\langle #1 \rangle}

\usepackage{xcolor}

\usepackage{amsmath}
\usepackage{amsfonts}
\usepackage{amsthm}
\usepackage{amssymb}

\usepackage[utf8]{inputenc} 
\usepackage[T1]{fontenc}    
\usepackage{hyperref}       
\usepackage{url}            
\usepackage{booktabs}       
\usepackage{amsfonts}       
\usepackage{nicefrac}       
\usepackage{microtype}      
\usepackage{xcolor}         

\usepackage{geometry}[top=2cm,bottom=2cm,left=2cm,right=2cm]

\usepackage{amsmath}
\usepackage{amsthm}
\usepackage{enumerate}
\usepackage{graphicx}
\usepackage{caption}
\usepackage{subcaption}
\usepackage{dsfont}
\usepackage{xfrac}
\usepackage{wrapfig}

\newcommand{\dna}{\Delta\kern-0.48em{\nabla}}

\newcommand{\R}{\mathbb{R}}
\newcommand{\N}{\mathbb{N}}

\renewcommand{\d}{\mathbf{d}}

\newcommand{\abs}[1]{\vert #1 \vert}

\newcommand{\PL}{\mathrm{PL}}

\pgfplotsset{compat=1.18}

\title{
On the fibers and semi-algebraicity of ReLU neuromanifolds}

\author{Axel Flinth}
\email{axel.flinth@umu.se}
\address{Department of Mathematics and Mathematical Statistics, Ume\r{a} University, 901 87 Ume\r{a}, Sweden }

\author{Stefano Mereta}
\email{stefano.mereta@cunef.edu}
\address{Departamento de Matem\'aticas, CUNEF Universidad, Calle de Almansa, 101, 28040, Madrid, Spain}

\author{Michele Pernice}
\email{mpernice@uw.edu}
\address{Department of Mathematics, University of Washington, Padelford, Seattle, WA 98195-4350, USA}

\begin{document}

\begin{abstract}
We study the semi-algebraicity of the neuromanifold $\mathcal{M}_\d$ of a feedforward ReLU neural network and its symmetries. 
We prove that $\mathcal{M}_\d$ is not a semi-algebraic quotient of the space of weights of the network. We introduce and study the notion of \emph{honest} open subset of the space of weights, where the network does not show any hidden symmetries. Finally, we conjecture that the maximal honest open is always semi-algebraic and prove that in the shallow case it is even Zariski.
\end{abstract}

\maketitle

\tableofcontents

\section*{Introduction}

A  machine learning model can be viewed as a map $\mathcal W \times \mathcal X \rightarrow \mathcal Y$
where $\mathcal W$ is the space of weights, $\mathcal X$ the space of inputs and $\mathcal Y$ the space of outputs. Given a pair $(w,x) \in \mathcal W \times \mathcal X$ we will denote its image as $f_w(x) \in \mathcal Y$. Equivalently, we may view the model as a parametrization of functions $x\mapsto f_w(x)$.  The space of such functions $\mathcal M :=\{f_w \co \mathcal X \rightarrow \mathcal Y \mid w \in \mathcal W\}$
is called the \emph{neuromanifold} associated to the model.

The (supervised) training of a model is an optimization process over the neuromanifold, seeking a function in $\mathcal{M}$ that minimizes a chosen loss with respect to a target function.
In practical settings, though, this optimization does not happen on the neuromanifold, but on the space of weights of the model, via the parametrization map. This can cause distortions: so called \emph{spurious} critical points, i.e.\ critical weights for the composition of the parametrization map with the loss function which are not critical points of the loss function on the neuromanifold. Furthermore, singular and boundary points of the neuromanifold are more prone to be critical points for the loss function, thus knowing the geometry of the latter would give insights on the training dynamic.

The theoretical algebro-geometric study of neuromanifolds has recently experienced a quick development, under the name of \emph{neuroalgebraic geometry} (see \cite{marchetti} for an overview). Thus, algebraic geometry fits in a broader toolkit from all areas of mathematics (probability, statistics, topology, convex geometry, functional analysis, category theory...) that can be exploited in the effort of unveiling the mysteries of the training process, improve its efficiency and eventually reduce its impact the existing energy infrastructure and the environment (see \cite{iea} for a detailed analysis of the matter).

The neuroalgebraic geometry of neural networks with  polynomial activation functions is already fairly well understood (see for example \cite{kohn2022geometry, kohn2024function, trager2019pure, kohn, shahverdi, henry}). In this case the neuromanifold is an algebraic variety (or more generally a semi-algebraic space) and there is a precise dictionary between the machine learning terminology and algebro-geometric invariants of the neuromanifold (see \cite[Table 1]{marchetti}).

For non-polynomial activation functions, little is known about the geometry of the neuromanifold. This motivates the present work, in which we study the semi-algebraicity of neuromanifolds associated to feedforward fully connected neural networks with \emph{ReLU} activation functions, as well as their (hidden) symmetries.

\subsection*{ReLU neuromanifolds}
In the following, $L \in \N$ is the number of layers of a neural network and $d_k \in \N$ is the width of the $k$-th layer, for $k = 0, \dots , L$. With this notation, we say that the neural network considered is of architecture $\d := (d_0, \dots ,d_L)$. The dimension of its space of weights is $M := \sum_{k=0}^L d_k(d_{k-1}+1)$, thus the neural network is a function 
\[
    \R^M \times \R^{d_0} \rightarrow \R^{d_L}.
\] 
Given $w \in \mathcal \R^M$ the function $f_w$ can be represented as a composition 
\[
    f_w = W_L \circ \sigma \circ W_{L-1} \circ \dots \circ \sigma \circ W_1,
\]
where $W_k$ is an affine transformation $\R^{d_{k-1}} \rightarrow \R^{d_k}$ and the activation function $\sigma$ is the rectified linear unit (ReLU for short) given by the coordinatewise application of $\max(x,0)$. 

Every function in the neuromanifold $\mathcal M_\d$ is a continuous piecewise linear (PL, for short) function $\R^{d_0} \rightarrow \R^{d_L}$ with finitely many linearity regions, thus $\mathcal M_\d$ is a subset of the space of continuous functions $\mathcal{C}^0(\R^{d_0}, \R^{d_L})$. Conversely, in \cite{zhang2018tropical} the authors prove that for any PL function $f \co \R^{n} \rightarrow \R^{m}$ there exist  $L \in \N$ and an architecture $\d =(d_0=n, d_1, \dots ,d_L=m)$ such that $f \in  \mathcal M_{\d}$.

In our treatement, we furthermore reduce to the case of zero biases (i.e.\ we will assume that the affine functions $W_k$ are in fact linear functions) and $d_L=1$. It is clear that results obtained for $d_L=1$ generalize to any width. Statements obtained for zero bias require a more careful effort in order to be generalized, but we restrict ourselves to this setting for simplicity of presentation.

In this setting, addressing the problem of identifiability (i.e.\ if weights can be uniquely recovered up to trivial simmetries from their image via $\Phi$) for a ReLU neural network of architecture $\d$ is equivalent to the study of the fibers of the parametrization map
\[
    \Phi: \R^M \to \PL \subset  C^{0}(\R^{d_0}) \quad \quad  w \mapsto f_w.
\]

\subsection*{Related works} The geometry of ReLU neuromanifolds has been already approached in \cite{vallin2023geometric} and \cite{masden2025algorithmic}. In \cite{hertrich2021towards} the authors study which functions lie in $\mathcal{M}$, rather than studying global properties of this space. Several works have been devoted to the study of the number of linear regions of the functions produced by a ReLU (and more in general maxout) neural network (see for example \cite{montufarbengio, montufarren}). In \cite{AlexMontu} the authors study, among other things, the geometry of the (Zariski closure of) the image of the maps $\Phi_n$ obtain as composition of the parametrization map with the evaluation on a finite number of points. As convex piecewise linear functions are tropical signomial and dual to polyhedra, in \cite{montufarbrand} the authors attach several combinatorial objects to a ReLU neural network and study what the properties of these objects can tell us about the network. Finally in \cite{grigsby2025functional, GriLinRol} the author study, via topological and analytical methods, the existence and some properties of open subsets of the space of weights where there are no \emph{hidden} symmetries. These results are extended and improved in \cite{montufargrillo}. Some of the results of this last work are very similar in spirit to what we present here: the authors  use polyhedral geometry and combinatorics, while we approach these problems with algebro-geometric techniques. We want to stress that even if (surprisingly!) some of the terminology they introduce is analogous to ours, we were not aware of their work before its appeareance as a preprint, and we worked completely independently.

\subsection*{Structure of the paper and results}
In Section \ref{sec:semi-algebraicity} we investigate the semi-algebraicity of the neuromanifold $\mathcal{M}_{\d}$, in the spirit of the program outlined in  \cite{marchetti}. We approach this study in two ways: first by introducing a notion of \emph{pointwise semi-algebraicity} for subsets of the space of continuous functions $C^0(\R^{d_0})$, and secondly by studying the abstract semi-algebraicity of the neuromanifold as a quotient of the space of weights.

Even though a notion of semi-algebraic subset has never been given for an infinite dimensional space such as $C^0(\R^{d_0})$, we hint that the neuromanifold should be seen as a \emph{pro-semi-algebraic} space: a categorical limit of the semi-algebraic spaces studied in \cite{AlexMontu} with the name of \emph{output varieties}. Each one of these is obtained by fixing a finite input set and evaluating the functions in the neuromanifold at these points, it lies inside a finite dimensional ambient space and is semi-algebraic in the usual sense. We believe this observation has practical relevance in the experimental study of ReLU neuromanifolds, as we explain later. 

Moreover, we address the abstract semi-algebraicity of the neuromanifold. Let us denote as $E_\Phi$ the equivalence relation induced on the space of weights $\R^M$ by the parametrization map $\Phi$. It is a straightforward fact that the neuromanifold, as a set, is a quotient of the space of weights by $E_\Phi$. This is not true in the category of semi-algebraic spaces, though: the main theorem of \cite{schei}, states the equivalence between the existence of the geometric quotient $\R^M/E_\Phi$ as a semi-algebraic space and that of a semi-algebraic subset of the space of weights with a certain property.  Making use of this equivalence, in Section \ref{sec:shallow-networks} we provide a counterexample to it in this situation, thus
\begin{theorem}\label{th:sheiderer-introduction}
    The neuromanifold $\mathcal M_\d$ of a ReLU neural network is not a semi-algebraic quotient of the space of weights by the equivalence relation induced by the parametrization map.
\end{theorem}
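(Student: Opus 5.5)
The plan is to use Scheiderer's criterion \cite{schei}. For a closed semi-algebraic equivalence relation $E \subseteq X\times X$ on a semi-algebraic space $X$, it characterizes when the geometric quotient $X/E$ exists as a semi-algebraic space: this happens if and only if there is a semi-algebraic subset $S\subseteq X$ with the following property. $S$ meets every equivalence class, and the restriction of the quotient map to $S$ is proper (equivalently, the saturation map $S\to X/E$ has suitably closed/finite behaviour, in the precise form stated in \cite{schei}). So the argument has two parts. First check that the hypotheses of the theorem apply to $E_\Phi$. Then show that no such $S$ exists, for one specific small architecture. That is enough, since the statement is a negative one about ReLU neuromanifolds in general. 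Concretely, I would take a shallow network with zero biases, say $\d=(1,1,1)$ or $\d=(1,2,1)$, so that $\Phi(w)(x)=\sum_i a_i\,\sigma(b_i x)$ and the fibers can be written down explicitly.

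\textbf{Step 1 (semi-algebraicity and closedness of $E_\Phi$).} For zero biases, $f_w$ is determined by its values on finitely many input points. For $d_0=1$ the value at $x=\pm1$ suffices, because $f_w$ is positively homogeneous. For general shallow networks one needs a finite set of points large enough to detect all breaklines. Hence $E_\Phi=\{(w,w') : \Phi_n(w)=\Phi_n(w')\}$ for the evaluation map $\Phi_n$ at these points. Each $\Phi_n$ is polynomial on the finitely many sign chambers, so it is semi-algebraic, and $E_\Phi$ is a closed semi-algebraic subset of $\R^M\times\R^M$.

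\textbf{Step 2 (explicit fibers).} For $\d=(1,1,1)$, $\Phi(a,b)(x)=a\sigma(bx)$. This equals $ab\,x$ on $x\ge0$ and $0$ on $x<0$ when $b>0$, and the mirror picture when $b<0$. The fibers are therefore the following. Over a nonzero function, the fiber is one branch of a hyperbola $\{ab=c,\ b>0\}$ (or $b<0$). The fiber over $0$ is the union of the two axes $\{a=0\}\cup\{b=0\}$. Nonzero fibers are closed but not compact, and their branches run off to infinity as $c\to 0$.

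\textbf{Step 3 (no good semi-algebraic section).} Suppose $S$ satisfies Scheiderer's condition. Properness forces $S\cap\Phi^{-1}(K)$ to be compact for compact $K$ in the quotient. It also forces the induced map from $S$ to the quotient to be closed, so that the quotient topology agrees with the image topology. Take the family of functions $f_c=c\,\sigma(x)$ with $c\to0^+$. Any point of $S$ over $f_c$ has the form $(a,b)$ with $ab=c$ and $b>0$. By properness these points stay in a compact set, so they accumulate at a point $(a_0,b_0)$ on the axes. Then use the mirrored family $c\,\sigma(-x)$, and also the fact that the zero fiber is the union of two lines, which should pin down a specific limiting behaviour. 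The contradiction I aim for is that the quotient topology near $[0]$ is not locally compact, or not Hausdorff in the required sense. The reason is that the saturation of a neighbourhood of a point on one axis contains the whole other axis. I would check this first: saturating a small ball around $(1,0)$ gives all $(a,b)$ with $|ab|$ small, which is a non-relatively-compact neighbourhood of the zero fiber. This neighbourhood cannot correspond to a proper image of a semi-algebraic set.

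\textbf{Main obstacle.} The hard step is Step 3. It requires translating Scheiderer's condition exactly, and finding the right obstruction. If $(1,1,1)$ turns out to admit a quotient (for instance because $ab$ with a sign already gives a semi-algebraic quotient map onto $\R$), I would pass to $\d=(1,2,1)$. There, permutation symmetry combined with collapsing neurons whose contribution is $a_i\sigma(b_ix)=0$ creates fibers of varying dimension. The fiber over $c\,\sigma(x)$ is then a union of a two-dimensional component, where both neurons are active with $a_1b_1+a_2b_2=c$, and lower components, where one neuron is dead. I would then show that any semi-algebraic $S$ meeting all fibers properly must jump in dimension in a way incompatible with the semi-algebraic triviality of the induced map.
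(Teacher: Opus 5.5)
\paragraph{The gap: the chosen architectures admit a quotient.} With zero biases and $d_0=1$, the semi-algebraic quotient actually exists. So Step 3 is trying to prove something false, both for $(1,1,1)$ and for your fallback $(1,2,1)$.

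Every network function is then positively homogeneous on $\R$, hence determined by $(f_w(1),f_w(-1))$.
\begin{itemize}
\item For $(1,1,1)$, with $\Phi(a,b)(x)=a\sigma(bx)$ in your notation, take the closed semi-algebraic set $K=\{(a,b):|a|=|b|\}$. It meets the zero class only in $(0,0)$, and every other class in exactly one point $(\pm\sqrt{|c|},\pm\sqrt{|c|})$. If $|a|,|b|\le R$, the $K$-representative of the class of $(a,b)$ has norm at most $\sqrt{2}\,R$. Since $E_\Phi$ is closed, $p_2|_{p_1^{-1}(K)}$ is proper and surjective. So condition (ii) of Theorem~\ref{theo:schie} holds, and the quotient is the cross $\{pq=0\}\subset\R^2$.
\item For $(1,2,1)$, the affine slice $K=\{b_1=1,\ b_2=-1\}$ meets each class exactly once, at $(f_w(1),1,f_w(-1),-1)$, continuously in $w$. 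The quotient is $\R^2$.
\end{itemize}
Neither of your proposed obstructions is an obstruction. Every class is unbounded (scaling orbits, the two axes), so saturations of small balls are never relatively compact, whether or not a quotient exists. Jumps in fiber dimension are harmless once a proper transversal exists.

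\paragraph{The missing idea.} You need the mechanism the paper uses with $\d=(2,2,1)$. It requires $d_0\ge 2$, so that hidden directions range over a positive-dimensional sphere and two of them can collide.

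Work on $\cP=(S^1\times\R)^2$ in the paper's notation, with $b_i$ the outer weights. For $u\neq v$ and $s\neq 0$, \Cref{cor:high-non-linearity} and the strong honesty of $U_0$ (\Cref{prop:strongly-honest}) show that the fiber of $\varphi$ over $s(f_u-f_v)$ consists only of $((u,s),(v,-s))$ and its swap. Both points have $|b_1|=|b_2|=|s|$, whereas $s(f_u-f_v)\to 0$ as $u\to v$.

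The paper takes $s=n$ and $n(v_n-v)\to 0$. Then $g_n=n(f_{v_n}-f_v)\to 0$ in $C^0$ while its fibers escape to infinity, contradicting properness of $K\to\im\varphi$.

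If you want to contradict condition (ii) exactly as stated, you need properness over compact sets of weights, and convergence in $C^0$ alone is not enough for that. Instead, fix $s$ and let $u_m\to v$. The weights $((u_m,s),(v,-s))$ converge to a point of $\varphi^{-1}(0)$. Properness then makes their $K$-representatives accumulate at points of $K\cap\varphi^{-1}(0)$ with $|b_1|=|b_2|=|s|$. Since $s$ is arbitrary, $K\cap\varphi^{-1}(0)$ is non-compact. This contradicts properness over any single point of $\varphi^{-1}(0)$.

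For $d_0\ge 2$, your Step 1 should rely on quantifier elimination applied to $\forall x\; f_w(x)=f_{w'}(x)$ together with continuity of $\Phi$. It should not rely on a finite set of evaluation points.
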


In Section \ref{sec:honest-symmetries} we approach the problem of identifiability from an algebro-geometric point of view. We introduce the space $\mathcal P$  obtained from the space of weights by taking into account the action scaling and permutation (the \emph{trivial} symmetries). In $\mathcal{P}$, we study what we call \emph{honest} open subsets, i.e.\ open subsets over which the fibers of the parametrization are trivial or, in other words, open subsets of $\mathcal P$ over which the network does not show any hidden symmetry behavior. We state the following conjecture:
\begin{conjecture}\label{conj:intro}
    For any architecture, the maximal honest open subset of $\mathcal P$  is a semi-algebraic.
\end{conjecture}
Notice that here, as opposed to the context of Theorem \ref{th:sheiderer-introduction}, we are dealing with semi-algebraicity as a subset of the finite dimensional space of weights, not as an intrinsic notion.

The introduction of honest open subsets of the space of parameters allows us to study the algebro-geometric nature of  previous results from  \cite{GriLinRol}. 

Lastly, in Section \ref{sec:shallow-networks} we restrict to the case of shallow networks, i.e.\ when $L=1$. Here we provide the promised counterexample to the semi-algebraicity of the neuromanifold as a quotient (Example \ref{ex:not-schei}) and prove Conjecture \ref{conj:intro} in a particularly strong form:
\begin{theorem}
    For shallow networks, the maximal honest open subset is Zariski open.
\end{theorem}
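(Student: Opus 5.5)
The plan is to make the shallow setting completely explicit and then characterize the maximal honest open subset by polynomial non-vanishing conditions. With zero biases and $d_L=1$, a shallow network computes
\[
f_w(x)=\sum_{i=1}^{d_1} a_i\,\sigma(\langle v_i,x\rangle), \qquad v_i\in\R^{d_0},\ a_i\in\R .
\]
The trivial symmetries are the permutations of the hidden neurons and the positive rescalings $(v_i,a_i)\mapsto(\lambda v_i,\lambda^{-1}a_i)$ with $\lambda>0$. Each summand is linear on two half-spaces and is non-linear exactly along the hyperplane $v_i^\perp$. Its second distributional derivative is supported there, with a density proportional to $a_i v_i\otimes v_i$ restricted to that hyperplane. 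We also use the identity $\sigma(t)-\sigma(-t)=t$: this is the source of the hidden symmetries, because two neurons with opposite directions $v_j=-\mu v_i$ can be traded for a linear function.

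First, we identify the generic locus. Let $U\subset\R^M$ be the set of weights where
\begin{itemize}
\item every $a_i\neq 0$ and every $v_i\neq 0$;
\item the lines $\R v_i$ are pairwise distinct, i.e.\ $v_i\wedge v_j\neq 0$ for $i\neq j$.
\end{itemize}
For $d_0\ge 2$, each of these is the non-vanishing of a polynomial, namely $a_i$, the maximal minors of $v_i$ (some coordinate being nonzero), and the $2\times 2$ minors of $(v_i,v_j)$. So $U$ is Zariski open, and so is its image in $\mathcal P$, since the conditions are invariant under the trivial symmetries. On $U$, the set where $f_w$ fails to be linear is exactly the union of the distinct hyperplanes $v_i^\perp$. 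The jump of the gradient across $v_i^\perp$ equals $a_i v_i$, up to the positive scaling. Hence $f_w$ determines the multiset $\{(v_i,a_i)\}$ up to the trivial symmetries, so the fibers over $U$ are trivial.

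Second, we show maximality: every point outside $U$ has a neighborhood, in fact every neighborhood, on which fibers are non-trivial. The cases are as follows.
\begin{itemize}
\item If $a_i=0$ or $v_i=0$, the neuron is dead. Then $v_i$ (respectively $a_i$) can be changed freely without changing $f_w$, and this produces non-trivial fibers at the point itself.
\item If $v_i=\mu v_j$ with $\mu>0$, the two neurons merge. Any redistribution of $a_i$ and $a_j$ preserving $a_i\mu+a_j$ gives the same function.
\item If $v_j=-\mu v_i$ with $\mu>0$, we use $\sigma(t)-\sigma(-t)=t$ together with the merging trick to move mass between the pair and produce distinct preimages.
\end{itemize}
Honesty should be checked for neighborhoods of such points, so we must show that any open set meeting the complement of $U$ contains a point whose fiber is non-trivial. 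This holds because each degeneracy above already exhibits a positive-dimensional fiber through the point itself. The maximal honest open set is therefore exactly the complement of these closed conditions, and it is Zariski open.

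The main obstacle is the recovery step on $U$: showing that the multiset $\{(v_i,a_i)\}$ is determined by $f_w$. The key point is that the hyperplanes $v_i^\perp$ are distinct, so each appears in the non-linearity locus with a nonzero gradient jump and cannot cancel. The jump across $v_i^\perp$ in direction $v_i$ is $a_i v_i$, up to orientation, and from this we recover the pair $(v_i,a_i)$ modulo positive scaling. Antipodal directions give the same hyperplane; this case was excluded from $U$, which is why the condition is $v_i\wedge v_j\neq0$ rather than merely $v_i\neq v_j$. The case $d_0=1$ needs separate bookkeeping: all $v_i$ are then collinear, and the statement reduces to a direct check.

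The point I am least sure about is whether the intended definition of $\mathcal P$ quotients out only positive scaling or also the dead neurons. If dead neurons are quotiented out, the complement should be adjusted accordingly, but it is still cut out by the polynomials $a_i$, the coordinates of $v_i$, and the minors of $(v_i,v_j)$.
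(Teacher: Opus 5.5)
Your argument has two genuine gaps, one in each half, and they push in opposite directions. First, the recovery step on $U$ is false, so $U$ is not honest. The gradient jump does not see the orientation of a neuron. Crossing $v^\perp$ in the direction of $v$, both $a\,\sigma(\langle v,x\rangle)$ and $a\,\sigma(\langle -v,x\rangle)$ jump by $av$, yet they differ by the linear function $a\langle v,x\rangle$. The non-linearity locus and the jumps therefore determine each neuron only up to $(v_i,a_i)\mapsto(-v_i,a_i)$, which is not a trivial symmetry. Flipping the neurons indexed by $I$ changes $f_w$ by $\sum_{i\in I}a_i\langle v_i,x\rangle$. So every point of $U$ with $\sum_{i\in I}a_iv_i=0$ for some $I\neq\emptyset$ has a hidden symmetry. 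With pairwise distinct lines this already happens for $d_0=2$, $d_1=3$:
\[
\sigma(x_1)+\sigma(x_2)+\sigma(-x_1-x_2)=\sigma(-x_1)+\sigma(-x_2)+\sigma(x_1+x_2).
\]
These are two points of your $U$ not related by permutations and positive scalings. At the very least you must remove the Zariski-closed loci $\{\sum_{i\in I}a_iv_i=0\}$, and you must fix the signs using the global linear part of $f_w$, not the jumps alone.

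Second, the maximality step fails for antipodal neurons. The identity $\sigma(t)-\sigma(-t)=t$ moves mass between $v$ and $-v$ only at the cost of a linear term, which the other neurons must absorb, and that is not automatic. For $(2,2,1)$, the function $\sigma(x_1)+2\sigma(-x_1)$ has fiber equal to its orbit: the slopes on the two sides of $\{x_1=0\}$ fix both coefficients, and a single neuron vanishes on a half-plane. Every nearby point has the same property. So this point lies in the maximal honest open set, which is therefore strictly larger than your $U$. This is why the paper's $U_0$ removes only $\{v_i=v_j\}$ and $\{b_i=0\}$; its maximality proof (Proposition~\ref{prop:shallow-max-honest}) matches your first two bullets.

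Be aware, however, that the displayed example also lies in $U_0$. Corollary~\ref{cor:high-non-linearity} fails once antipodal vectors occur: with $f_v=\sigma(\langle v,x\rangle)$ and $u=(e_1+e_2)/\sqrt{2}$ one has $(f_{e_1}-f_{-e_1})+(f_{e_2}-f_{-e_2})-\sqrt{2}\,(f_u-f_{-u})=0$. This breaks the step $\sum_{j\in J_i}b_j=0$ in Proposition~\ref{prop:strongly-honest}. So neither your $U$ nor $U_0$ is the maximal honest open set in general. A correct proof has to determine exactly which antipodal and sign-flip configurations have non-trivial fibers. For $(2,3,1)$, for instance, an antipodal pair whose two output weights sum to zero has a non-trivial fiber, while a generic antipodal pair does not. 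The proof must then show that these configurations form a Zariski-closed set.
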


\subsection*{Acknowledgements}
We thank Antonio Lerario for sharing his ideas about the content of Section \ref{sec:semi-algebraicity} during his visit to KTH. We also want to thank Kathlén Kohn for bringing this topic to our attention and Elisenda Grigsby and Giovanni Marchetti for several useful conversations. Part of this work was carried out while S.M. was partially supported by the Wallenberg AI, Autonomous Systems and Software Program (WASP) funded by the Knut and Alice Wallenberg Foundation.  A.F. also acknowledges support from the Wallenberg AI, Autonomous Systems and Software Program (WASP) funded by the Knut and Alice Wallenberg Foundation.

\section{Semi-algebraicity in $C^0(\R^{d_0})$ and abstract semi-algebraicity}\label{sec:semi-algebraicity}

As we already mentioned in the introduction, endowing the neuromanifold with a geometric structure could help immensely when dealing with concrete optimization problems related to minimizing the loss function. For a semi-algebraic activation function, one may wonder if the neuromanifold has a natural semi-algebraic structure. In the case of ReLU, one major difficulty relies in the (still very mysterious) semi-algebraic structure of the space of continuous functions. In what follows, we analyze different ways to think of the semi-algebraicity of a neuromanifold.

The first natural approach is to endow the space of continuous function with some infinite-dimensional semi-algebraic structure, so that one can at least show that there is some compatibility between the semi-algebraic structure on the space of weights and that of the space of functions through the parametrization map. We want to remark that it is possible that some of the claims made in this section can be generalized to other activation functions using the notions of tame topologies and o-minimal structures.

For the rest of this subsection, we set $d:=d_0$ the width of the first layer.

 \begin{definition}
     A continuous map $C^0(\R^d)\rightarrow \R$ is said to be \emph{polynomial} if for every finite dimensional vector subspace $V\subset C^0(\R^d)$ the restriction $V\rightarrow \R$ is polynomial (in the usual sense). We can say that a continuous function $\cM \rightarrow C^0(\R^d)$ from a semi-algebraic space $\cM$ is \emph{semi-algebraic} if for every polynomial morphism $C^0(\R^d)\rightarrow \mathbb{R}$ the composition 
    $$\cM \rightarrow C^0(\R^d) \rightarrow \R$$
    is semi-algebraic (in the usual sense). 
\end{definition}
Although the previous one is the most natural definition, we introduce a second (seemingly weaker) definition, which is easier to test.

\begin{definition}
     Let $\cM$ be a semi-algebraic space  and suppose that we have a continuous morphism $\alpha:\cM \rightarrow C^0(\R^d)$ for some positive integer $d$.
    We say that $\alpha$ is \emph{pointwise semi-algebraic} if for every $v\in \R^d$ the composition
    $$ \cM \stackrel{\alpha}{\rightarrow} C^0(\R^d) \stackrel{\pi_v}{\rightarrow} \R$$
    is semi-algebraic, where the second morphism is the projection $\pi_v$ defined by the association $g \mapsto g(v)$ for every $g \in C^0(\R^d)$.
\end{definition}

\begin{remark}
    It is clear that every semi-algebraic morphism $\cM \rightarrow C^0(\R^d)$ is pointwise semi-algebraic. We believe that the converse is also true, but one would need an infinite dimensional version of the Nullstellensatz.
\end{remark}

Checking that the parametrization morphism is pointwise semi-algebraic is straightforward.

\begin{lemma}\label{lem:para-is-point-alg}
    The parametrization morphism $\Phi:\R^M \rightarrow C^0(\R^d)$ associated to a ReLU neural network is pointwise semi-algebraic. 
\end{lemma}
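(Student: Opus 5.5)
Fix an input $v\in\R^d$. The plan is to show that the evaluation map $w\mapsto f_w(v)$ is semi-algebraic on $\R^M$ by induction on the layers. We use two standard facts. First, semi-algebraic maps are stable under composition and under taking products of component maps. Second, polynomial maps and $\max(\cdot,0)$ are semi-algebraic: the graph of $t\mapsto\max(t,0)$ is $\{(t,s): (s=t \wedge t\ge 0)\vee(s=0\wedge t\le 0)\}$, a semi-algebraic subset of $\R^2$. Consequently the coordinatewise ReLU $\sigma\colon\R^{d_k}\to\R^{d_k}$ is semi-algebraic, being a product of semi-algebraic maps.

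We define intermediate maps $h_k\colon\R^M\to\R^{d_k}$ by
\[
h_0(w)=v,\qquad h_k(w)=\sigma\bigl(W_k(w)\,h_{k-1}(w)\bigr)\ \ (1\le k\le L-1),\qquad h_L(w)=W_L(w)\,h_{L-1}(w).
\]
Here $W_k(w)$ is the weight matrix (with bias, if present) read off from the coordinates of $w$, so $f_w(v)=\pi_v(\Phi(w))=h_L(w)$. The map $h_0$ is constant and hence semi-algebraic. Suppose $h_{k-1}$ is semi-algebraic. The map $w\mapsto (w,h_{k-1}(w))$ is then semi-algebraic. Composing it with the polynomial map $(w,x)\mapsto W_k(w)x$ (bilinear in the weights and $x$, plus the bias term) gives a semi-algebraic map, and composing with $\sigma$ keeps it semi-algebraic. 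Induction yields that $h_L=\pi_v\circ\Phi$ is semi-algebraic. Continuity of $\Phi$ into $C^0(\R^d)$, as required in the definition, follows since $f_w(x)$ is jointly continuous in $(w,x)$.

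There is no real obstacle in this argument. The only point needing care is the closure of semi-algebraic maps under composition, which is the Tarski–Seidenberg theorem: the graph of a composition is a projection of an intersection of semi-algebraic sets. We simply invoke it.

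An alternative route avoids Tarski–Seidenberg. Partition $\R^M$ according to the activation pattern at $v$, that is, the signs of the pre-activations. This pattern is determined by finitely many polynomial sign conditions, so it gives a finite partition of $\R^M$ into semi-algebraic pieces, and on each piece $f_w(v)$ is a polynomial in $w$. This also shows that $\pi_v\circ\Phi$ is semi-algebraic.
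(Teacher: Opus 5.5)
Your proof is correct and is essentially the paper's argument. The paper only invokes ``the usual abstract quantifier argument'' together with the semi-algebraicity of ReLU, which is exactly the Tarski--Seidenberg closure under composition that you spell out layer by layer; your activation-pattern alternative is also valid and gives an explicit quantifier-free description of the graph of $\pi_v\circ\Phi$.
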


\begin{proof}
    This follows from the usual abstract quantifier argument using the fact that the activation function is semi-algebraic. 
\end{proof}

We claim that the pointwise semi-algebraicity may help in endowing the parametrization morphism (and thus the neuromanifold) with a pro-semi-algebraic structure. This is not as strong as a semi-algebraic one, but we argue it could still have be useful for optimization arguments. 

Let $\QQ$ be the countable dense subset of rational numbers in $\R$ and let $q:\N \rightarrow \QQ^d$ be a bijective map of sets. Consider the limit diagram
$$  \dots \rightarrow  \R^n \rightarrow \R^{n-1} \rightarrow  \dots \rightarrow \R$$
in the category of topological vector spaces and denote by $\R^{\N}$ the limit (which is the product space with the product topology). Morphisms in the diagram above are just projections forgetting the last coordinate. Moreover, for every $n$ we have a morphism $\pi_n:C^0(\R^d) \rightarrow \R^n$ defined by the assignment $f\mapsto \big(f(q(1)),\dots, f(q(n))\big)$ and taking the limit we get a morphism $\pi: C^0(\R^d) \rightarrow \R^{\N}$ which is injective, since two continuous functions which agree on a dense subset agree on the whole space. Notice that the morphism $\pi$ is the composition of the morphism $C^0(\R^d) \rightarrow \R^{\R^d}$ with the natural projection morphism, where $\R^{\R^d}$ is the set of all functions $\R^d\rightarrow \R$ with the pointwise convergence topology.

This alternative point of view allows us to endow the parametrization morphism (and thus the neuromanifold) with the structure of a pro-semi-algebraic map, namely a limit diagram
\[
\begin{tikzcd}
                &                  & \R^M \arrow[d, "\Phi_{n-1}"] \arrow[ld, "\Phi_n"'] \arrow[rrd, "\Phi_1"] &                 &    \\
\dots \arrow[r] & \R^{n} \arrow[r] & \R^{n-1} \arrow[r]                                                       & \dots \arrow[r] & \R
\end{tikzcd}
\]
where $\Phi_n:=\pi_n \circ \Phi$. Hence, the neuromanifold (endowed with the pointwise convergence topology) can be described as the limit of the (affine) semi-algebraic spaces obtained as images of $\Phi_n$ when $n$ varies, thus it is a pro-object in the category of semi-algebraic spaces.

\begin{remark}
A priori, it is not clear whether the neuromanifold is representable by a semi-algebraic space. In algebraic geometry, one can prove the representability of limit diagrams assuming the affineness of the transition morphisms. All the results the authors are aware of in semi-algebraic geometry rely on some uniform boundedness of the presentation of this semi-algebraic spaces, since, by definition, semi-algebraic spaces are locally semi-algebraic subsets of a finite-dimensional vector space (over $\R$). It would be interesting to develop a semi-algebraic theory for Banach spaces, where these objects seem to live naturally. Nevertheless, by denoting as $\mathcal{A}_n$ the image of $\Phi_n$ (which is a semi-algebraic set thanks to \Cref{lem:para-is-point-alg}), one may argue that $\mathcal{A}_n$ \emph{approximates} the neuromanifold arbitrarily well as we let $n$ grow. Moreover, there could be a very smart choice for the function $q$ that makes the approximation faster, depending on the architecture of the neural network. 

It is worth noticing that for the purpose of concrete computations (for instance in optimization problems), mathematicians (and computers) work with finite amount of data (for instance to estimate the $L^{\infty}$ norm, or in general any loss function), thus it seems natural to study the semi-algebraic sets $\mathcal{A}_n$ as computationally tractable approximations of the neuromanifold. Indeed, there are already instances in the literature: in \cite{AlexMontu}, the authors study exactly the geometry of (the Zariski closure of) $\mathcal{A}_n$. Furthermore, although the pointwise convergence topology is coarser than the compact-open topology, for the sake of computations they actually agree, specifically when checking if two functions agree on a finite data set.
\end{remark}

\begin{remark}
 As already pointed out in the previous remark, the algebraic variety $\overline{\cA_n}^{\text{Zar}}$ already appears in the literature: see \cite{AlexMontu}. Therefore one may be tempted to study the pro-algebraic variety obtained by taking the limit of $\overline{\cA_n}^{\text{Zar}} \subset \R^n$. Already at the finite-dimensional level though, it is clear that passing to the Zariski closure loses a lot of information: clearly, the Zariski closure of any ball in $\R^n$ (no matter how small) is the whole space. Even worse, the limit procedure does not necessarily behave well with taking the Zariski closure.
\end{remark}

Another natural approach is to establish the semi-algebraicity of the neuromanifold abstractly, namely without forcing the image inside the space of continuous functions to be semi-algebraic (in any sense). This is usually done using the language of groupoids. In what follows, we will make parallels with the theory of groupoids (more specifically equivalence relations) in the category of schemes, which brought to the birth of algebraic spaces.

Let us start with a very simple observation for topological spaces.

\begin{remark}
    Let $f:X\rightarrow Y$ be a continuous map of topological spaces. We can form the equivalence relation on $X$ defined by the subset $E_f:=X\times_{Y,f} X \subset X \times X$, namely the pairs $(x_1,x_2) \in X\times X$ such that $f(x_1)=f(x_2)$. If $f$ is a strict and surjective map (i.e.\ a subset $U$ of $Y$ is open if and only if $f^{-1}(U)$ is open in $X$), then $Y$ is the quotient in the category of topological spaces of $X$ by $E_f$. 
\end{remark}

Inspired by the previous remark, one can try to come up with conditions on a general equivalence relation $E\subset X\times X$ which imply the existence of the quotient. This is in general very difficult to do, and depending on the geometric structure chosen, there could be different answers. In algebraic geometry, if $X$ is a scheme, we know that flat and finitely presented groupoids give rise to algebraic spaces (or in general algebraic stacks if the morphism $E\rightarrow X\times X$ is not injective). Surprisingly enough, in the semi-algebraic setting the following theorem provides a complete answer to the existence of a quotient in the category of semi-algebraic spaces. This is a direct consequence of Theorem 5.1 of \cite{schei}, where we specialize to $M\simeq \R^M$. 

\begin{theorem}\label{theo:schie}
    Let $E \subset \R^M \times \R^M$ be a closed semi-algebraic equivalence relation of $\R^M$ and let $p_1$ and $p_2$ be the two projections $E \rightarrow \R^M$. The following are equivalent:

    (i) the geometric quotient $\R^M/E$ exists in the category of affine semi-algebraic spaces;

    (ii) there exists a closed semi-algebraic subset $K\subset \R^M$ such that the morphism 
        \[
            p_2\vert_{p_1^{-1}(K)}: p_1^{-1}(K) \rightarrow  \R^M
        \]
    is proper and surjective.
\end{theorem}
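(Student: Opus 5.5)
This statement is the specialization of Theorem 5.1 of \cite{schei} to the affine semi-algebraic space $M=\R^M$, so the plan is to check that our situation meets the hypotheses of Scheiderer's result and then translate its conclusion. Scheiderer works with a semi-algebraic space $M$ and a closed semi-algebraic equivalence relation $E\subset M\times M$. His theorem asserts that the geometric quotient $M/E$ exists in the category of (affine) semi-algebraic spaces if and only if there is a closed semi-algebraic subset $K\subset M$ such that the restriction of the second projection to $p_1^{-1}(K)$ is proper and surjective onto $M$. Concretely, $p_1^{-1}(K)=E\cap(K\times M)$.

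The first step is to verify the standing assumptions in the present setting. The space $\R^M$ is affine, locally complete (indeed, locally compact), and Hausdorff, so it is an affine semi-algebraic space in Scheiderer's sense. A closed semi-algebraic subset $E\subset\R^M\times\R^M$ that is reflexive, symmetric and transitive is precisely a closed semi-algebraic equivalence relation there. The projections $p_1,p_2\colon E\to\R^M$ are the restrictions of the coordinate projections, so they are semi-algebraic and continuous.

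The second step is to match the two notions of quotient. Scheiderer's \emph{geometric quotient} is a semi-algebraic space $Y$ together with a surjective, identifying (strict) semi-algebraic map $\R^M\to Y$ whose fibers are exactly the $E$-classes, and whose sheaf of semi-algebraic functions is the sheaf of $E$-invariant functions. This is condition (i), with affineness of $Y$ required as in his statement. The properness condition in (ii) refers to the usual semi-algebraic notion: $p_2\vert_{p_1^{-1}(K)}$ is proper if and only if preimages of compact sets are compact, which holds because we are over $\R$ and $\R^M$ is locally compact. With these identifications, (i)$\iff$(ii) is verbatim Theorem 5.1 of \cite{schei}.

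The only real obstacle is bookkeeping. Scheiderer states his theorem for general semi-algebraic spaces, possibly over an arbitrary real closed field. We need to confirm that his affineness and properness conventions coincide with ours when the field is $\R$ and the ambient space is $\R^M$; I expect no difficulty beyond citing his definitions. If we wanted a self-contained argument for (ii)$\Rightarrow$(i), we would take $Y=p_2(p_1^{-1}(K))$ modulo $E$, realized as the image of $K$. Properness gives closedness of the relevant saturations, which yields the identifying property. We will not reprove this, however, and will simply invoke \cite{schei}.
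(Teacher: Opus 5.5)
Your proposal is correct and takes the same approach as the paper, which gives no independent argument and obtains the statement directly as the specialization of Scheiderer's Theorem 5.1 to the affine semi-algebraic space $\R^M$. The bookkeeping you do (affineness of $\R^M$, closedness of $E$, and agreement of semi-algebraic and topological properness over $\R$) is all that is needed, and the optional sketch of (ii)$\Rightarrow$(i) is not required.
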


Notice that we can define an equivalence relation $E_{\Phi}\subset \R^M \times \R^M$ on the space of weights consisting of pairs $(v,w)$ such that $\Phi(v)=\Phi(w)$. By the usual abstract quantifier argument, it is easy to see that $E_{\Phi}$ is a semi-algebraic subset of $\R^M \times \R^M$. Moreover, we have the following lemma.

\begin{lemma}\label{lem:topology}
Let $f:X\rightarrow Y$ be a continuous map of topological spaces and let $E_f:=X\times_{Y,f} X \subset X \times X$ be the equivalence relation on X given by $f$, as above. Suppose that $X$ and $E_f\subset X\times X$ are affine semi-algebraic spaces, and that the quotient $X/E_f$ exists as an affine semi-algebraic space. Then there exists a canonical morphism $\pi:X/E_f \rightarrow Y$ of topological spaces which is bijective. Moreover, if $f$ is strict and surjective, then $\pi$ is an homeomorphism. 
\end{lemma}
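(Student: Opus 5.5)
The plan is to build $\pi$ from the universal property of the geometric quotient, check bijectivity directly, and then compare topologies when $f$ is strict and surjective.

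\emph{Construction of $\pi$.} Write $q\colon X\to X/E_f$ for the quotient morphism. In Scheiderer's setting, a geometric quotient is in particular a categorical quotient in the category of topological spaces. Concretely:
\begin{itemize}
\item $q$ is surjective and its fibres are exactly the $E_f$-classes;
\item $X/E_f$ carries the quotient topology, so $U\subset X/E_f$ is open if and only if $q^{-1}(U)$ is open in $X$.
\end{itemize}
I will take this from the definition in \cite{schei}, specialised as in Theorem \ref{theo:schie}.

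Since $f$ is continuous and constant on $E_f$-classes (by the very definition of $E_f$), it factors set-theoretically through $q$. This gives a unique map $\pi$ with $\pi\circ q=f$. Continuity of $\pi$ follows from the quotient topology: for $V\subset Y$ open, $q^{-1}(\pi^{-1}(V))=f^{-1}(V)$ is open, hence $\pi^{-1}(V)$ is open. Canonicity is just the uniqueness of this factorisation.

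\emph{Bijectivity.} For injectivity, suppose $\pi(q(x_1))=\pi(q(x_2))$. Then $f(x_1)=f(x_2)$, so $(x_1,x_2)\in E_f$ and hence $q(x_1)=q(x_2)$. Surjectivity onto the image of $f$ is automatic. I expect this is the point needing care: the statement says ``bijective'', which presumably means onto $f(X)$, or else uses the surjectivity of $f$ assumed in the second part. I will state it as a bijection onto $f(X)$, and onto $Y$ whenever $f$ is surjective.

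\emph{Homeomorphism when $f$ is strict and surjective.} It remains to show that $\pi^{-1}$ is continuous, i.e.\ that $\pi$ is open. Take $U\subset X/E_f$ open. Then
\[
f^{-1}(\pi(U))=q^{-1}(\pi^{-1}(\pi(U)))=q^{-1}(U),
\]
using that $\pi$ is bijective. The right-hand side is open, so strictness of $f$ gives that $\pi(U)$ is open.

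Alternatively, one could cite the earlier Remark: both $Y$ and $X/E_f$ are then topological quotients of $X$ by the same equivalence relation, hence canonically homeomorphic. The main obstacle is therefore not any computation. It is justifying that the semi-algebraic geometric quotient carries the quotient topology of $X$, and I will extract this from the definition in \cite{schei}.
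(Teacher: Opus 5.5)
The paper states this lemma without proof. It is only used through the remark on strict surjective maps that precedes Theorem~\ref{theo:schie}, so there is no argument of the authors to compare with. Your outline is the natural one, and it is correct once its one input is in place:
\begin{itemize}
\item factor $f$ through $q$;
\item get injectivity from $E_f=X\times_Y X$;
\item show openness of $\pi$ via $f^{-1}(\pi(U))=q^{-1}(U)$.
\end{itemize}
Your reading of ``bijective'' as ``bijective onto $f(X)$'' is right. Without surjectivity of $f$ nothing more can hold, so the statement of the lemma is imprecise there.

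The input you defer to \cite{schei} is where your plan, as written, does not quite go through. Brumfiel and Scheiderer work over an arbitrary real closed field. The quotient-topology (``identifying'') condition in their definition of $X/E_f$ therefore only says that a \emph{semi-algebraic} subset $A\subset X/E_f$ is closed iff $q^{-1}(A)$ is closed. Continuity of $\pi$, however, needs $\pi^{-1}(V)$ to be open for \emph{every} open $V\subset Y$, with $Y$ for instance $C^0(\R^d)$. Such sets $\pi^{-1}(V)$ are not semi-algebraic in general, so the definition alone does not give what you use.

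Over $\R$, and for $X$ closed in some $\R^n$ (as for $X=\R^M$ or $\cP$), the upgrade holds by curve selection:
\begin{itemize}
\item Suppose $q^{-1}(B)$ is closed, $y\in\overline{B}\setminus B$, and $b_n\in B$ with $b_n\to y$. Put $\rho(z):=\min\{|x| : q(x)=z\}$; this is a semi-algebraic function on $X/E_f$ by Tarski--Seidenberg.
\item If $\rho(b_n)$ is bounded along a subsequence, minimal-norm lifts accumulate at some $x\in X$ with $q(x)=y$. Then $x\in\overline{q^{-1}(B)}=q^{-1}(B)$, so $y\in B$, a contradiction.
\item Otherwise $\rho(b_n)\to\infty$. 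Curve selection, applied to $\{(z,s): s>0,\ \rho(z)>1/s\}$ at $(y,0)$, gives a semi-algebraic arc $\gamma\colon(0,\epsilon]\to X/E_f$ with $\gamma(t)\to y$ and $\rho(\gamma(t))\to\infty$. Shrink $\epsilon$ so that $\rho(\gamma(t))>\rho(y)$, hence $\gamma(t)\neq y$.
\item Then $A:=\gamma((0,\epsilon])$ is semi-algebraic and not closed, since $y\in\overline{A}\setminus A$.
\item Yet $q^{-1}(A)$ is closed. A convergent sequence $x_k\in q^{-1}(A)$ is bounded, and $q(x_k)=\gamma(t_k)$ with $\rho(\gamma(t_k))\le |x_k|$. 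So the $t_k$ stay bounded away from $0$, and the limit of $x_k$ lies over some $\gamma(t^*)\in A$.
\end{itemize}
This contradicts the semi-algebraic identifying property. Hence $q$ is a genuine topological quotient map.

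With this in place the rest of your argument is fine. Continuity of $\pi$ follows from $q^{-1}(\pi^{-1}(V))=f^{-1}(V)$. Your openness argument uses only continuity of $q$, injectivity of $\pi$ and strictness of $f$, not the identifying property, and it gives the homeomorphism. Your alternative of comparing two topological quotients of $X$ by the same relation is equivalent, and it is presumably what the authors had in mind. It rests on exactly the same input.
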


\begin{remark}
    It is clear that if the geometric quotient exists as an affine semi-algebraic space, then by \Cref{lem:topology} we get that the morphism $\cM\subset C^0(\R^d)$ is pointwise semi-algebraic. 
\end{remark}

Suppose that $\Phi:\R^M \rightarrow \im \Phi$ is strict and that $\cM:=\R^M/E_{\Phi}$ exists as an affine semi-algebraic space. Then we could identify topologically $\im \Phi$ with $\cM$, giving the neuromanifold a semi-algebraic structure. Unfortunately, the assumptions above fail even in the case of a shallow neural network, as we show in \Cref{sec:shallow-networks} (see \Cref{ex:not-schei}).

\section{Honest subsets of weights and hidden symmetries}\label{sec:honest-symmetries}

Even though we cannot endow the neuromanifold with a semi-algebraic structure of a quotient, in this section we try to convince the reader that the study of the neuromanifold as a groupoid in semi-algebraic space can lead to some interesting remarks regarding the \emph{singularities} of the neuromanifold. These singularities are somehow reflected in the fibers of the parametrization morphism $\Phi$, which can be studied forgetting about the morphism itself and focusing on the groupoid $E_{\Phi}\hookrightarrow \R^M \times \R^M$. Our intent in what follows is to absorb the generic symmetries of a ReLU neural network without changing the image of the morphism $\Phi$, i.e. the neuromanifold. We will focus on the case with no bias, for simplicity.

Let $\mathbf{d}:=(d_0,\dots,d_{L})\in \N^{L+1}$ be the architecture vector of length $L$ as above. Recall that we set $d_{L}=1$ and that the space of weights
\begin{align*}
    \R^M & \simeq \textup{Mat}_{d_1,d_0}(\R) \times \textup{Mat}_{d_2,d_1}(\R) \times \dots \times \textup{Mat}_{1,d_{L-1}}(\R) \\  & \simeq (\R^{d_0})^{d_1} \times (\R^{d_1})^{d_1} \times \dots \times \R^{d_{L-1}}
\end{align*}
is an affine space of dimension $M = M(\mathbf{d})=\sum_{i=0}^Ld_id_{i+1}$ in this case. Given an architecture, there exist two type of well-known symmetries (see \cite{ArmJod} for a more detailed discussion):
\begin{itemize}
    \item[(s)] scaling: for any neuron in any hidden layer, multiplying the incoming weights and the bias by any positive constant, while dividing the outgoing weights by the same constant; 
    \item[(p)] permutation: reordering the neurons in any hidden layer, along with the corresponding permutations of the weights and biases associated with them.
\end{itemize}

Formally, the group of these symmetries can be described as follows. Consider the group $\GL_{\mathbf{d}}:=\prod_{i=0}^{L} \GL_{d_i}(\R)$: we can define an action on $\textup{Mat}_{\mathbf{d}}:=\prod_{i=0}^{L-1} \textup{Mat}_{d_{i+1},d_{i}}(\R)$ by the association 
$$ (A_0,\dots, A_{L}) \cdot  (M_0, M_1, \dots, M_L) := (A_1 M_0 A_0^{-1},\dots , A_{L} M_{L-1} A_{L-1}^{-1}) $$
for $ A_i \in \GL_{d_i}(\R) $ and $M_i \in \textup{Mat}_{d_{i+1},d_i}(\R)$. 

From now on, we will denote by $\R_+$ the multiplicative subgroup of $\R\setminus \{0\}$ given by the positive real numbers. Furthermore, we will denote by $\Sigma_i$ the symmetric group of $d_i$-elements. 
\begin{definition}
    We denote by ${\rm Sc}(\mathbf{d})$ the \emph{scaling group} $(\R_+)^L$ seen as the subgroup of $\GL_{\mathbf{d}}$ where we identify the $i$-th copy of $\R_+$ inside $\GL_{d_i}(\R)$ as the positive scalar matrices (notice that for $i=0$ and for $i=L$ we are setting the projection map ${\rm Sc}(\mathbf{d}) \rightarrow \GL_{d_i}(\R)$ to be trivial).

    Moreover, we denote by ${\rm Pr}(\mathbf{d})$ the \emph{permutation group} $\prod_{i=1}^{L}\Sigma_{i}$ seen as the subgroup of $\GL_{\mathbf{d}}$ where we identify $\Sigma_i$ inside $\GL_{d_i}(\R)$ as the permutation matrices (as above, for $i=0$ and for $i=L$ we are setting the projection map ${\rm Pr}(\mathbf{d}) \rightarrow \GL_{d_i}(\R)$ to be trivial).
\end{definition}

    Finally, we denote by $G(\mathbf{d})$ the subgroup generated by ${\rm Sc}(\mathbf{d})$ and ${\rm Pr}(\mathbf{d})$ inside $\GL_{\mathbf{d}}$. The action of $G(\mathbf{d})$ is determined by the action of $\GL_{\mathbf{d}}$ on $\textup{Mat}_{\mathbf{d}}$ described above. Our first goal is to absorb the scaling group action. In order to do that, we will show that we can consider an open of $\R^M$, without changing the image of $\Phi$. A similar idea has been used in \cite{AroLiLyu}. Let $P_{\mathbf{d}} \subset \R^M$ be the open subset defined as the product 
    \[
        (\R^{d_0}\setminus 0)^{d_1} \times (\R^{d_1}\setminus 0)^{d_2} \times \dots \times (\R^{d_{L-1}}\setminus 0)^{d_L} \times \R^{d_L}.
    \]
Notice that the last factor of the product remains untouched. 
\begin{lemma}
    With the notation above, $\Phi(P_{\mathbf{d}})=\Phi(\R^M)\subset C^0(\R^{d_0})$.
\end{lemma}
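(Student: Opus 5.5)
The inclusion $\Phi(P_{\mathbf{d}})\subset\Phi(\R^M)$ is immediate, since $P_{\mathbf{d}}\subset\R^M$. For the reverse inclusion we take an arbitrary weight $w=(M_0,\dots,M_{L-1})\in\R^M$. Whenever some neuron in a hidden layer has incoming weight vector (a row of some $M_{i}$, $i\le L-2$, or more generally any row of a non-final matrix) equal to $0$, we modify $w$ to a new weight $w'$ with $\Phi(w')=\Phi(w)$ and strictly fewer zero rows. Induction on the number of zero rows then produces an element of $P_{\mathbf{d}}$ in the same fiber of $\Phi$.

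The modification runs as follows. Suppose neuron $j$ in hidden layer $i$ (so $1\le i\le L-1$) has zero incoming row. Its pre-activation is identically $0$ as a function of the input, so its post-activation $\sigma(0)=0$ is also identically zero. Hence this neuron contributes nothing downstream, and the outgoing weights from it (the $j$-th column of the next matrix) are irrelevant to $f_w$. We replace the zero incoming row by any vector $u\ne 0$ for which the neuron's output remains identically zero. Equivalently, the pre-activation $u\cdot h_{i-1}(x)$ must be $\le 0$ for all $x$, where $h_{i-1}$ denotes the vector of activations of the previous layer.

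\begin{itemize}
\item For $i\ge 2$, $h_{i-1}(x)$ is a vector of ReLU outputs and so lies in $\R_{\ge0}^{d_{i-1}}$. Taking $u=(-1,\dots,-1)$ gives $u\cdot h_{i-1}\le 0$ everywhere, so $\sigma(u\cdot h_{i-1})\equiv 0$.
\item For $i=1$, the input $x\in\R^{d_0}$ is arbitrary and no nonzero linear form is nonpositive everywhere. Instead, we keep the row nonzero and set the outgoing column to zero: choose any $u\ne 0$ and replace the $j$-th column of the next matrix by $0$. The neuron's output is then discarded, so $f_w$ is unchanged.
\end{itemize}

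The second trick in fact works at every layer, and it is the one I would use throughout. It makes the incoming row nonzero while killing the outgoing column, and it is allowed because the last factor $\R^{d_L}$, as well as columns in general, are not required to be nonzero in $P_{\mathbf{d}}$.

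There is one subtlety: zeroing a column of $M_i$ affects the row condition only for the neuron we just fixed. No row of $M_i$ becomes zero unless that row was already supported only on column $j$. That row would then be a newly zero row in layer $i+1$.

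To avoid this, I would process layers from the last hidden layer backwards. Alternatively, and more cleanly, I would use the first trick for $i\ge2$, since it changes only the incoming row and not any columns. I would then use the column-zeroing trick only for $i=1$. Zeroing a column of $M_1$ could create a zero row in layer $2$, but that row is then handled by the first trick, which touches nothing else. Processing layers in increasing order therefore terminates.

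The main care point is exactly this bookkeeping: each fix must not create new zero rows in already-processed layers. With the ordering just described this is ensured, since both tricks only alter weights feeding into the same or the following layer. The equality $\Phi(w')=\Phi(w)$ at each step follows from the observation that the altered neuron outputs the zero function both before and after the change, or that its output is multiplied by zero weights.
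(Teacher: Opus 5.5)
Your final scheme is correct, but for hidden layers $i\ge 2$ it takes a different route from the paper. The paper uses only your second trick at every layer: replace the zero rows of the $j$-th matrix by arbitrary nonzero vectors and zero the corresponding columns of the next matrix. It organizes this as a forward induction along the filtration $\R^M=P_0\supset P_1\supset\dots\supset P_L=P_{\mathbf d}$, where $P_j$ requires nonzero rows only in the first $j$ matrices. Zero rows created in the next matrix are harmless there, because that matrix is not yet constrained and is repaired at the next step. You instead use the column trick only at the first layer. For $i\ge 2$ you kill the neuron in place with the row $(-1,\dots,-1)$, exploiting that the previous layer's activations are nonnegative. Your repair is entirely local: it changes one row and never creates new zero rows, so termination is immediate. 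The price is that it relies on ReLU-specific features ($\sigma\ge 0$ and $\sigma\equiv 0$ on $(-\infty,0]$), whereas the paper's move works verbatim for any activation with $\sigma(0)=0$.

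Two of your side remarks are wrong, though your final scheme does not depend on them. First, the column trick does not reduce the number of zero rows in general: zeroing a column of $M_1$ can create several zero rows in layer $2$. So the ``strictly fewer zero rows'' induction in your opening paragraph is not valid as stated, and you need the layer-by-layer ordering instead. Second, processing from the last hidden layer backwards with the column trick fails. Repairing layer $i$ zeroes columns of $M_i$ and can create zero rows in layer $i+1$, which has already been processed. The correct order for the column trick is forward, which is exactly the paper's induction.
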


\begin{proof}
The idea is essentially that we can move any zero-rows in an original parametrization to be in the final layer.
    Let 
        \[
            P_j:= \prod_{i=0}^{j-1} (\R^{d_i} \setminus 0)^{d_{i+1}} \times \prod_{i=j}^{L} (\R^{d_i})^{d_{i+1}}
        \]
    where $P_{L}=P_{\mathbf{d}}$ and $P_{j+1}\subset P_{j}$: we will prove that for every $0\leq j\leq L-1$ we have $\Phi(P_j)=\Phi(P_{j+1})$. This clearly implies the statement since 
    \[
        \Phi(P_{\mathbf{d}})=\Phi(P_{L})=\Phi(P_{L-1})=\dots=\Phi(P_{1})=\Phi(P_{0})=\Phi(\R^M).
    \]
    Let $0\leq j\leq L-1$ and $(A_0,\dots,A_L) \in P_{j}\setminus P_{j+1}$. Therefore (at least) one of the rows of the matrix $A_{j}\in (\R^{d_j})^{d_{j+1}}$ is the origin of $\R^{d_j}$. Suppose without loss of generality that the first $k$ rows of $A_j$ are the zero vector for $k \geq 1$. Then we can set
    \begin{itemize}
        \item $A_i'=A_i$ for every $i\notin \{j,j+1\}$;
        \item $A_j'$ to be the matrix equal to $A_j$ except for the first $k$ rows, where we can choose any $k$-tuple of non-zero vectors;
        \item $A'_{j+1}$ to be the matrix equal to $A_{j+1}$ except for the first $k$ columns, which we set to be the zero vector.
    \end{itemize} 
    It is clear now that $(A_1',\dots,A_L') \in P_{j+1}$ and $\Phi(A_0',\dots,A_L')=\Phi(A_0,\dots,A_L)$.
\end{proof}
Notice that $P_{\mathbf{d}}$ is $G(\mathbf{d})$-invariant, thus the action restricts to $P_{\mathbf{d}}$. Moreover, the action of ${\rm Sc}(\mathbf{d})$ is free on $P_{\mathbf{d}}$, and in fact 
\[
    P_{\mathbf{d}}/{\rm Sc}(\mathbf{d}) \simeq \R^{d_L} \times \prod_{i=0}^{L-1} (S_i)^{d_{i+1}}
\]
where we denote by $S_i$ the $(d_i-1)$-dimensional sphere. To ease the notation, we set $\cP:= P_{\mathbf{d}}/{\rm Sc}(\mathbf{d})$. Therefore, since the restriction of $\Phi$ to $P_{\mathbf{d}}$ is $G(\mathbf{d})$-invariant, the neuromanifold is determined by the morphism $\varphi: \cP \rightarrow C^0(\R^{d_0})$ obtained by taking the quotient by ${\rm Sc}(\d)$, i.e.\ the restriction of $\Phi$ factors as follows:

\[
  \begin{tikzcd}
   P_{\mathbf{d}} \arrow[rr, two heads, "\Phi \mid_{P_{\mathbf{d}} }"] \arrow[rd, two heads] &     & \mathcal{M}_{\mathbf{d} }  \\ & \cP  \arrow[ru, two heads, swap , "\varphi"]&.  
   \end{tikzcd}
\]

In other words, we reduced ourselves to study the equivalence relation $E_{\varphi}\subset \cP \times \cP$. Notice that ${\Pr}(\mathbf{d})$ still acts on $\cP$ non-trivially: we get a morphism 
\[
    {\Pr}(\mathbf{d}) \times \cP \longrightarrow \cP \times \cP
\]
induced by the projection on the first factor and by the action on the second factor of the product. Since the morphism $\varphi$ is ${\Pr}(\mathbf{d})$-invariant, the morphism above actually factors through $E_{\varphi}\subset \cP \times \cP$.
Let $\rho: {\Pr}(\mathbf{d}) \times \cP \rightarrow E_{\varphi}$ be the induced morphism. We have a commutative diagram of morphisms
\[
  \begin{tikzcd}
   {\Pr}(\mathbf{d}) \times \cP \arrow[rd, "{\rm pr}_2"'] \arrow[rr, "\rho"] &     & E_{\varphi} \arrow[ld, "{\rm pr}_1"] \\ & \cP &               \end{tikzcd}
\]
where the two diagonal morphisms are the natural projections. 
We are finally able to give a general definition, which helps us formalizing statements from \cite{phuong2020functional}, \cite{RolKor} and \cite{GriLinRol}.

\begin{definition}\label{def:honest}
    Let $U$ be a $ {\Pr}(\mathbf{d})$-invariant subset of $\cP$. We say that $U$ is \begin{itemize}
    \item \emph{weakly honest} if the morphism $ \rho \vert_U: {\Pr}(\mathbf{d})\times U \rightarrow E_{\varphi} \cap (U\times U) $ is surjective;
    \item \emph{honest}  if the morphism $ {\Pr}(\mathbf{d}) \times U \rightarrow E_{\varphi} \cap (U\times \cP)$ is surjective.
    \item \emph{strongly honest} if the morphism $ {\Pr}(\mathbf{d}) \times U \rightarrow E_{\varphi} \cap (U\times \cP)$ is  an isomorphism.
    \end{itemize}
\end{definition}

\begin{remark}
Notice that if $U$ is $ {\Pr}(\mathbf{d})$-invariant, then $\rho \vert_{U}$ factors through $E_{\varphi} \cap (U\times U)$.
\end{remark}

\begin{remark}\label{rem:set-theo}
    Set-theoretically, the conditions in \Cref{def:honest} can be rewritten using the fibers of the morphism $\varphi$. Indeed, given $u \in U$, we have that the fiber of the first projection $E_{\varphi} \rightarrow \cP$ over $u$ is canonically isomorphic to $\varphi^{-1}(\varphi(u))$. Therefore we get a morphism $\rho_u: {\Pr}(\mathbf{d})\rightarrow \varphi^{-1}(\varphi(u))$ for every $u \in U$. Notice that since $U$ is $ {\Pr}(\mathbf{d})$-equivariant, the morphism $\rho_u$ factors through $\varphi^{-1}(\varphi(u)) \cap U$. The subset $U$ is weakly honest if for every $u \in U$ the group $ {\Pr}(\mathbf{d})$ acts transitively on the subset $\varphi^{-1}(\varphi(u))\cap U$: this says that there are no hidden symmetries of the parametrization morphism $\varphi$ when restricted to $U$. Moreover we say that $U$ is honest if the morphism $\rho_u: {\Pr}(\mathbf{d}) \rightarrow \varphi^{-1}(\varphi(u))$ is surjective. Equivalently, $U$ is honest if it is weakly honest and $U$ is saturated with respect with the morphism $\Phi$ (i.e. $\Phi^{-1}(\Phi(U))=U$). Finally, we say that $U$ is strongly honest if for every $u \in U$ the morphism $\rho_u$ is an isomorphisms (which is equivalent to $U$ being honest and the action being free).
\end{remark}

\begin{remark}
    Let us specify what has been proven (at least in some form) in the literature regarding honest subsets: in \cite{RolKor} it is proven that, under the \emph{Linear Region Assumption}, there exists a set whose complement has measure zero and such that it is possible to reconstruct the neural network from the function up to scaling and permutations. Moreover, in \cite{GriLinRol}, the authors prove that there always exists a set of positive measure on which the only symmetries are given by scaling and permutations. It is not clear to us if this positive measure set is a semi-algebraic open. Notably, though, it is proven that this set contains at least an open (showing as a byproduct that the Linear Region Assumption is an open condition in the space of weights), thus their result implies that there always exists a weakly honest semi-algebraic open (since an open ball is semi-algebraic). 
\end{remark}

The previous remarks inspired us to state the following conjecture. 

\begin{conjecture}\label{conj:max-open}
    The maximal honest (respectively strongly honest) open for a ReLU neural network of fixed architecture is semi-algebraic.
\end{conjecture}

Before discussing the conjecture, we want to point out that it is not too hard to convince ourselves that a maximal honest open exists and it is unique (by a simple application of Zorn's lemma).
\begin{remark}
    Notice that we state the conjecture for honest \emph{open} subsets. One may wonder if the condition of being honest is actually an open condition, namely that if $\rho_u$ is a surjection for $u \in \cP$, then there exists an open neighbourhood $U$ of $u$ which is honest. This will imply straightforwardly that the maximal honest subset is open. Even though we do not prove it, we believe that honesty is an open condition. To try to convince the reader, we sketch an idea that could help in proving the claim.

    Let us discuss the case of \emph{strongly honest} subsets. The question can be reformulated as follows: suppose we are given a group $G$, a commutative diagram
    \[
    \begin{tikzcd}
    G \times P \arrow[rd, "{\rm pr}"'] \arrow[r, "f"] & E \arrow[d, "h"] \\
     & P               
    \end{tikzcd}
    \]
    and define the subset $U:=\{p\in P \mid  f_p \textup{ is an isomorphism}\} \subset P$. Is $U$ an open subset? If we work in the category of schemes, $G$ is a finite group and $h$ is separated, the flatness criterion for fibers gives us the openness of $U$. However, in our setting, $P$ is the parameter space, $E$ is the equivalence relation induced by $\varphi$, namely $P\times_{\varphi} P$ and $G$ is the permutation group. While $G$ is a finite group and $P$ is an affine space (over $\R$), the space of relations is only semi-algebraic a priori. We do not know if the same technique can be used in the semi-algebraic setting. We leave it to the reader as an interesting question. Regarding the honest case, it is even more tricky. Indeed, even in the category of schemes, the openness of the surjectivity of the fiber of $f$ over $P$ does not follow from the assumptions above.
\end{remark}

\begin{remark}\label{rem:weak-vs-honest}
Notice that it is likely that \Cref{conj:max-open} is false for maximal weakly honest opens. Indeed, it is not too difficult to convince oneself that, if we do not force the open $U$ to be saturated with respect with the morphism $\varphi$, there is no reason to expect the semi-algebraicity of $U$.
\end{remark}
In the next section, we will study shallow networks and prove that there exists even a \emph{Zariski} strongly honest open subsets, and that this is in fact maximal between honest open subsets.

\begin{remark}
    We cannot expect to find a maximal honest Zariski open for non-shallow neural networks, as the following example shows. Indeed, it seems that as soon as we leave the shallow network case, there exists a positive measure subset of the space of weights which is not honest. Consider the architecture vector $(2,2,2,1)$, whose associated ReLU neural network can be represented by the following diagram 
    where $(A,B,c) \in \R^{10}$: 
    \[
    \R^2 \xrightarrow{A} \R^2 \xrightarrow{B} \R^2 \xrightarrow{c} \R.
    \]
    Consider now the closed subset of positive measure $D$ defined by the parameters $(A,B,c)$ such that the matrix $B$ has only negative entries. Then the parametrization morphism $\Phi$ sends $D$ to the identically zero function. Since $D$ has positive measure (in fact it contains an open), it cannot exists a Zariski open where the only symmetries are scaling and permutations. Moreover, the counterexample does not really depend on the architecture. We can repeat the same idea as above for the matrix between the second and the third layer of every non-shallow network.
\end{remark}

\section{Shallow neural networks}\label{sec:shallow-networks}

In this section, we focus on the study of the neuromanifold induced by the simplest architecture possible, the case when $L=1$, i.e.\ when there is only one hidden layer. These are called \emph{shallow} neural networks. We give a geometric description of the neuromanifold inside the space of continuous functions. We want to remark that we do not claim complete originality here: the description we presented is probably well-known to experts. Nevertheless, it is fundamental to recall some of what follows to the reader to help understand how complex and unbounded the geometry of the neuromanifold can be. Moreover, we prove that the maximal honest open in the shallow case is actually a Zariski open in the space of parameters and we give an example where Theorem \ref{theo:schie} does not hold.

As above, we will focus on the case of zero bias. Since we are in the shallow case, the parametrization morphism 
\[
    \Phi: \R^M \longrightarrow C^0(\R^{d_0})
\]
can be described explicitly as follows. As in the shallow case $M=d_0d_1+d_1$, a vector in $\R^M$ can be thought as a pair $(A,b)$ where $A \in \textup{Mat}_{d_1,d_0}(\R)\simeq (\R^{d_0})^{d_1}$ and $b\in \R^{d_1}$. The function $\Phi(A,b)$ is defined by the rule
\[
    x \mapsto \sum_{i=1}^{d_1} b_i \sigma( a_i \cdot x)
\]
where $a_i$ is the $i$-th row of the matrix $A$ and $\sigma$ is the ReLU activation function. 

Using the notation and reduction steps described in \Cref{sec:honest-symmetries}, it is enough to study the (image of the) morphism 
\[
    \varphi: (S \times \R)^{d_1} \longrightarrow C^0(\R^{d_0})
\]
where $S:=S^{d_0-1}\simeq (\R^{d_0}\setminus 0)/(\R_+)$ is the $(d_0-1)$-dimensional sphere. The morphism $\varphi$ is $\Sigma_{d_1}$-invariant, where the action is given by permutation of the $d_1$-fold product of $S\times \R$. Notice that since $\Sigma_{d_1}$ is a finite group, if the neural network did not have other symmetries, we would end up with a semi-algebraic quotient (thanks to \Cref{lem:topology} and \Cref{theo:schie}). This is not the case, as we are going to show.

We can describe $\varphi$ as follows: given an element $\{ (v_i,b_i)\} \in (S \times \R)^{d_1}$, we have that the function $\varphi(\{ (v_i,b_i)\})$ is defined as
\[
    x \mapsto \sum_{i=1}^{d_1} b_i \sigma( v_i \cdot x), 
\]
analogously to $\Phi$,
where all we have done is just restricting the parametrization morphism to the case of unit vectors, since $\sigma$ is positively homogeneous.

\begin{definition}
    Given a piecewise linear function $f$, we denote by $\Gamma_f$ the \emph{skeleton} of $f$, i.e.\ the (possibly empty) closed subset of $\R^d$ where the function $f$ is not differentiable. 
\end{definition}
The next lemma, although very simple, perfectly depicts the strong non-linearity of piecewise linear functions, which we believe is the reason why the neuromanifold cannot be endowed with a global affine semi-algebraic structure. 
\begin{lemma}\label{lem:high-non-linearity}
    Let $d$ be a positive integer, $b:=\{ b_i \} \in \R^{d}$ and $f_i \in \PL(\R^n)$ be piecewise linear functions with skeleton $\Gamma_i$ for $1\leq i\leq d$. Suppose that $\Gamma_i \not\subset \cup_{j\neq i} \Gamma_j$ for every $1\leq i\leq d$ and that 
    \[
        \sum_{i=1}^{d} b_i f_{i} = 0
    \]
    as a function in $C^0(\R^n)$. Then $b_i=0$ for $i=1,\dots, d$.
\end{lemma}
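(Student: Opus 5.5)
The argument is local: for each fixed index $i$ we look near a point of $\Gamma_i$ that avoids all the other skeleta, and use the fact that a finite sum of functions is differentiable wherever every summand is.

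Fix $i$. By hypothesis there is a point $x_0\in\Gamma_i\setminus\bigcup_{j\neq i}\Gamma_j$. Each $\Gamma_j$ is closed, so there is an open neighbourhood $V$ of $x_0$ with $V\cap\Gamma_j=\emptyset$ for all $j\neq i$. On $V$ every $f_j$ with $j\neq i$ is therefore differentiable. The identity $\sum_j b_jf_j=0$ then gives, on $V$,
\[
b_if_i=-\sum_{j\neq i}b_jf_j .
\]
The right-hand side is differentiable on $V$, hence differentiable at $x_0$.

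Suppose $b_i\neq 0$. Then $f_i=-b_i^{-1}\sum_{j\neq i}b_jf_j$ is differentiable at $x_0$. This contradicts $x_0\in\Gamma_i$, since $\Gamma_i$ is by definition the set of points where $f_i$ is not differentiable. Hence $b_i=0$, and since $i$ was arbitrary, all coefficients vanish.

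The only delicate point is the existence of the neighbourhood $V$. It needs $\Gamma_i\setminus\bigcup_{j\neq i}\Gamma_j$ to be nonempty, which is the hypothesis, and each $\Gamma_j$ to be closed, which holds because the definition of skeleton asserts it. No finer structure of piecewise linear functions is used.
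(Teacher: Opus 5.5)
Your proof is correct, and it is more elementary than the paper's. Both arguments work at a point $x_i\in\Gamma_i\setminus\bigcup_{j\neq i}\Gamma_j$, but the paper does not use differentiability directly. It introduces the linear operator $\dna_w f(x)=\lim_{t\to 0^+}\bigl(\partial_{-w}f(x)-\partial_{-w}f(x+tw)\bigr)$, the jump of one-sided directional derivatives. It notes that $\dna_w f_j(x_i)=0$ for $j\neq i$ because $f_j$ is affine near $x_i$, and asserts that some $w$ has $\dna_w f_i(x_i)\neq 0$.

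That last step relies on the local conic structure of PL functions, and it is not true at every skeleton point. If $f_i$ is locally an odd positively homogeneous function, e.g.\ $f_i(x)=\mathrm{med}(x_1,x_2,0)$ at $x_i=0$, then $\dna_w f_i(0)=f_i(w)+f_i(-w)=0$ for all $w$. So the paper implicitly needs $x_i$ in the relative interior of an $(n-1)$-dimensional cell of $\Gamma_i$. This can be arranged, since $\Gamma_i\setminus\bigcup_{j\neq i}\Gamma_j$ is relatively open in $\Gamma_i$, but the paper does not say so.

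Your argument avoids all of this and holds for arbitrary functions, with $\Gamma$ taken as the non-differentiability locus. It is even shorter than you make it. The identity $b_if_i=-\sum_{j\neq i}b_jf_j$ holds on all of $\R^n$, and each $f_j$, $j\neq i$, is differentiable at $x_0$ simply because $x_0\notin\Gamma_j$. So neither the neighbourhood $V$ nor the closedness of the $\Gamma_j$ is needed.

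What the paper's operator buys in return is quantitative information. $\dna_w$ is a linear functional measuring the size of the kink, so it still produces linear relations among the coefficients when skeleta coincide. For instance, on the hyperplane $v^\perp$ one gets $\dna_w f_{v}=\dna_w f_{-v}=|v\cdot w|$, which would give $b_1+b_2=0$ in the exceptional case of Corollary~\ref{cor:high-non-linearity}. A pure differentiable/non-differentiable dichotomy cannot see this.
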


\begin{proof}
    Fix $i$ and let $x_i\in \R^n$ be a point in $\Gamma_i \setminus (\cup_{j\neq i} \Gamma_j)$. Let us define the operator 
    $\dna_{w}(-)$ on a function $f$ for a direction $\omega\in S^{n-1}$ as 
    $$
    \dna_wf(x) := \lim_{t\to 0^+} \left ( \partial_{-w}f(x)-\partial_{-w}f(x+tw) \right )
    $$
    Notice that the operator is clearly linear, thus if we apply it to the linear combination, we get
    \[
        \sum_{i=1}^{d} b_i  (\dna_{w}{f_{i}}) \equiv 0.
    \]
    Since $f_j$ is linear around $x_i$ for every $j \neq i$, we get that $\dna_{w}f_j(x_i)= 0$ for any $w \in S^{n-1}$.  Moreover, since $x_i$ is in the skeleton $\
    \Gamma_i$, there exists a $w$ such that $\dna_{w}{f_{i}}(x_i)\neq 0$. Thus $b_i=0$ and we can conclude.
\end{proof}

From now on, let us denote by $f_{v}$ the function $ \sigma( v \cdot x)$ for a unit vector $v \in S$. As a corollary of the previous lemma the following holds in the shallow case. 

\begin{corollary}\label{cor:high-non-linearity}
    Let $d$ be a positive integer, $b:=\{ b_i \} \in \R^{d}$ and $\{v_i\} \in S^{d}$ where $S$ is the $(d-1)$-dimensional (real) sphere. Suppose that $v_i \neq v_j$ for every $i\neq j$ and that 
    \[
    \sum_{i=1}^{d} b_i f_{v_i} = 0
    \]
    as a function in $C^0(\R^n)$. Then $b_i=0$ for $i=1,\dots, d$.
\end{corollary}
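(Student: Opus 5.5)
The plan is to reduce the statement to Lemma~\ref{lem:high-non-linearity} applied to the functions $f_{v_i}(x)=\sigma(v_i\cdot x)$. The first step is to identify the skeleton of each $f_{v_i}$. Since $v_i$ is a unit vector, $f_{v_i}$ is linear (equal to $v_i\cdot x$) on the open half-space $\{v_i\cdot x>0\}$ and identically zero on $\{v_i\cdot x<0\}$. At every point of the hyperplane $H_i:=v_i^{\perp}$ the two gradients $v_i$ and $0$ differ, so $f_{v_i}$ is not differentiable there. Hence $\Gamma_{f_{v_i}}=H_i$.

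The second step is to check the hypothesis $\Gamma_i\not\subset\bigcup_{j\neq i}\Gamma_j$ of the lemma. A hyperplane is irreducible and is not a finite union of proper closed subsets, so this reduces to showing $H_i\neq H_j$ for $j\neq i$. Since $H_i=H_j$ exactly when $v_j=\pm v_i$, the hypothesis $v_i\neq v_j$ only rules out $v_j=v_i$. For all pairs with $v_j\neq -v_i$, I would pick $x_i\in H_i\setminus\bigcup_{j\neq i}H_j$ and run the second-difference operator $\dna_{v_i}$ of the lemma's proof at $x_i$. This gives $b_i\,\dna_{v_i}f_{v_i}(x_i)=0$ with $\dna_{v_i}f_{v_i}(x_i)=\pm 1\neq 0$, hence $b_i=0$.

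The main obstacle is the case of antipodal pairs $v_j=-v_i$, which the stated hypothesis does not exclude. Here $H_i=H_j$ and Lemma~\ref{lem:high-non-linearity} does not apply directly. I would handle it in three moves.
\begin{itemize}
\item Group each antipodal pair into $g_i:=b_if_{v_i}+b_jf_{-v_i}$. Its kink across $H_i$ has jump $b_i+b_j$, since $f_v$ and $f_{-v}$ are both convex with jump $1$ in the normal direction. Evaluating $\dna_{v_i}$ at a generic point of $H_i$ then gives $b_i+b_j=0$.
\item Using $f_v-f_{-v}=v\cdot x$, this reduces $\sum_i b_if_{v_i}$ to a linear function $\big(\sum_{\text{pairs}}b_iv_i\big)\cdot x$.
\item The vanishing of this linear function forces $\sum b_iv_i=0$.
\end{itemize}
To conclude $b_i=0$ from $\sum b_iv_i=0$, one needs the representatives of the antipodal pairs to be linearly independent. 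In general this fails. For instance, in $\R^2$ take three unit vectors at $120^\circ$ together with their antipodes: then $\sum_k\big(f_{v_k}-f_{-v_k}\big)=0$. So this last step is where I expect the argument to need care. I would check first whether the intended reading of the statement implicitly excludes antipodal vectors, which is the natural condition making the hyperplanes $H_i$ distinct. Under that reading, the proof is complete after the second step.
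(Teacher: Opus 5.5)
Your analysis is correct, and the obstruction you hit lies in the statement, not in your argument. Your closing caveat is warranted.

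The paper's proof does exactly your first two steps: the skeleton of $f_{v_i}$ is $v_i^{\perp}$, and $\Gamma_i\subset\bigcup_{j\neq i}\Gamma_j$ iff $v_j=\pm v_i$ for some $j\neq i$. It then handles antipodes by saying that, since each $v_i$ has at most one antipode, Lemma~\ref{lem:high-non-linearity} reduces everything to the case $d=2$, $v_1=-v_2$, settled ``by direct verification''. That reduction is precisely what your third step shows to be invalid. The kink argument only yields $b_i+b_j=0$ for each antipodal pair. After that, the pairs jointly contribute the linear function $\bigl(\sum b_iv_i\bigr)\cdot x$, and independence of a single pair says nothing about several pairs together.

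Your hexagon example, $\sum_{k=1}^{3}\bigl(f_{v_k}-f_{-v_k}\bigr)=\bigl(\sum_k v_k\bigr)\cdot x=0$, is a genuine counterexample: six pairwise distinct unit vectors, all coefficients $\pm1$. Padding with further distinct vectors carrying coefficient $0$ shows the corollary is false for any $n\ge 2$ and $d\ge 6$. No reading of the hypothesis $v_i\neq v_j$ rescues it.

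Your three moves in fact prove the sharp version: $\sum_i b_if_{v_i}=0$ forces $b=0$ if and only if the chosen representatives of the antipodal pairs are linearly independent. This holds automatically with at most two pairs, so the statement is true for $d\le 5$. Under the stronger hypothesis $v_i\neq\pm v_j$, your second step is a complete proof, essentially identical to the paper's use of the lemma.

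The error also propagates. Proposition~\ref{prop:strongly-honest} applies this corollary to $2d_1$ vectors. For $d_0\ge2$, $d_1\ge3$, the identity $\sigma(t)-\sigma(-t)=t$ gives a point $(\mathbf v,b)\in U_0$ (three unit vectors at $120^\circ$, $b=(1,1,1)$) with $\varphi(\mathbf v,b)=\varphi(-\mathbf v,b)$, although $-\mathbf v$ is not a permutation of $\mathbf v$.
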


\begin{proof}
    First of all, notice that $\Gamma_i:=\Gamma_{f_{v_i}}$ is the hyperplane in $\R^n$ whose normal vector is $v_i$. Therefore, $\Gamma_{i}\subset \cup_{j\neq i}\Gamma_{j}$ if and only if $\exists j\neq i$ such that $v_i = \pm v_j$. By hypothesis, there exists at most a unique $j\neq i$ such that $v_i=-v_j$. Therefore, \Cref{lem:high-non-linearity} gives us that it is enough to show the statement for $d=2$ and $v_2+v_1=0$. This case follows from direct verification.
 \end{proof}
 
\Cref{cor:high-non-linearity} formalizes the non-linearity of the function $\sigma$, stating that any collection of pairwise distinct unit vectors will give rise to a system of linearly independent functions.

Moreover, it helps us give a more geometric description of the image of $\varphi$: the neuromanifold $\cM$ can be described as the union of $d_1$-dimensional hyperplanes $H_{\mathbf{v}}$ in $C^0(\R^{d_0})$, where $\mathbf{v}:=\{v_i\}$ varies in $S^{d_1}/\Sigma_{d_1}$ (remember that the morphism $\varphi$ is $\Sigma_{d_1}$-invariant) and $H_{\mathbf{v}}$ is the span of the functions $f_{v_1}, \dots, f_{v_{d_1}}$. 

\begin{remark}\label{rem:matroid-int}
    Notice that we can completely describe this intersections of hyperplanes: given the hyperplanes $H_{\mathbf{v}_1}, \dots, H_{\mathbf{v}_s}$ associated to the sets of unit vectors $\mathbf{v}_1, \dots, \mathbf{v}_s$, we have that $\bigcap_{t=1}^s H_{\mathbf{v}_t}$ is the vector space in $C^0(\R^{d_0})$ spanned by the functions $f_v$ where $v \in \cap_{t=1}^s \mathbf{v}_t$ (i.e. the functions associated with the unit vectors that appear in every set $\mathbf{v}_t$ for $t=1,\dots,s$). The combinatorial structure of the neuromanifold can be described by a matroid indexed over $S$.
\end{remark}

In what follows, we will construct a Zariski open subset of $\cP$ and prove that it is the maximal honest open subset. Let $\Delta$ be the $\Sigma_{d_1}$-equivariant Zariski-closed in $S^{d_1}$ defined as 
\[
    \Delta := \{(v_1, \dots , v_{d_1}) \mid \exists i,j \in \{1, \dots , d_1\} \textup{ such that } v_i = v_j\}.
\]
Moreover, let $D$ be the $\Sigma_{d_1}$-equivariant Zariski-closed in $\R^{d_1}$ consisting of vectors which have (at least) one coordinate equal to 0. Finally, let $U_0$ be the complement of the union of $\Delta \times \R^{d_1} $ and $S^{d_1} \times D $ inside $\cP \simeq (S \times \R)^{d_1}$.

\begin{proposition}\label{prop:strongly-honest}
     With the notation above, the $\Sigma_{d_1}$-equivariant Zariski open $U_0\subset \cP\simeq (S \times \R)^{d_1}$ is strongly honest. 
\end{proposition}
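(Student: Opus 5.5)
By \Cref{rem:set-theo}, it suffices to fix $u=\{(v_i,b_i)\}\in U_0$ and show that the orbit map $\rho_u\colon \Sigma_{d_1}\to\varphi^{-1}(\varphi(u))$ is a bijection. We first note that $U_0$ is $\Sigma_{d_1}$-invariant and Zariski open, since both $\Delta$ and $D$ are $\Sigma_{d_1}$-equivariant Zariski closed sets. We then prove surjectivity (honesty) and injectivity (freeness) separately, working throughout with the functions $f_v$ and \Cref{cor:high-non-linearity}.

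For surjectivity, take any $u'=\{(w_j,c_j)\}\in\cP$ with $\varphi(u')=\varphi(u)$; note that $u'$ is not assumed to lie in $U_0$. This gives
\[
\sum_{i=1}^{d_1} b_i f_{v_i}-\sum_{j=1}^{d_1} c_j f_{w_j}=0 .
\]
We group the terms by distinct unit vectors. Let $T=\{v_1,\dots,v_{d_1}\}\cup\{w_1,\dots,w_{d_1}\}\subset S$, and for each $t\in T$ set
\[
\beta_t:=\sum_{v_i=t} b_i-\sum_{w_j=t} c_j .
\]
Then $\sum_{t\in T}\beta_t f_t=0$ with the $t$'s pairwise distinct, so \Cref{cor:high-non-linearity} gives $\beta_t=0$ for every $t\in T$.

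Since $u\notin\Delta\times\R^{d_1}$, the $v_i$ are pairwise distinct. Since $u\notin S^{d_1}\times D$, every $b_i\neq 0$. Hence for each $i$ we get $\sum_{w_j=v_i}c_j=b_i\neq0$, so $v_i$ occurs among the $w_j$. This gives $d_1$ distinct values among $d_1$ entries, so $(w_j)$ is a permutation of $(v_i)$: there is $\tau\in\Sigma_{d_1}$ with $w_{\tau(i)}=v_i$. The multiplicities are then all one, so $\beta_{v_i}=0$ reads $c_{\tau(i)}=b_i$. Therefore $u'=\tau\cdot u$, which proves surjectivity.

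For injectivity, suppose $\tau\cdot u=u$. Then $v_{\tau(i)}=v_i$ for all $i$, and distinctness of the $v_i$ forces $\tau=\mathrm{id}$. So the action is free on $U_0$ and each $\rho_u$ is a bijection.

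The main obstacle I expect is the step where the $w_j$ may coincide with one another, or satisfy $w_j=-v_i$, with coefficients that could cancel. Grouping by distinct directions before invoking \Cref{cor:high-non-linearity} handles this, because that corollary only requires the directions to be pairwise distinct, and antipodal pairs are allowed. The pigeonhole count then excludes any leftover directions.

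If ``isomorphism'' in \Cref{def:honest} is meant semi-algebraically rather than set-theoretically, a further remark is needed. Over $U_0$, the map $\Sigma_{d_1}\times U_0\to E_\varphi\cap(U_0\times\cP)$ is a bijection. Its inverse sends $(u,u')$ to $(\tau,u)$, where $\tau$ is read off by matching the coordinates $w_j$ with the $v_i$. This inverse is locally constant in $\tau$, hence continuous and semi-algebraic, so the map is an isomorphism of semi-algebraic sets.
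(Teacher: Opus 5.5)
Your route is the paper's route: merge the $2d_1$ terms by direction, kill the coefficients with \Cref{cor:high-non-linearity}, then pigeonhole. The step you single out as the main obstacle is exactly where it fails. \Cref{cor:high-non-linearity} is false once antipodal directions occur, so from $\sum_{t\in T}\beta_t f_t=0$ you cannot conclude $\beta_t=0$. The reason is that $f_t-f_{-t}=\langle t,x\rangle$ is linear. At a generic point of $t^\perp$, both $f_t$ and $f_{-t}$ have the same gradient jump $t$. So the gradient-jump argument of \Cref{lem:high-non-linearity} only yields three things: $\beta_t=0$ when $-t\notin T$; $\beta_t+\beta_{-t}=0$ for each antipodal pair; and the residual linear condition $\sum\beta_t\,t=0$, summed over one representative $t$ per pair. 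Three pairs in a plane already give a nontrivial relation, e.g.\ $f_{e_1}-f_{-e_1}+f_{e_2}-f_{-e_2}-\sqrt2\,(f_n-f_{-n})=0$ with $n=(e_1+e_2)/\sqrt2$. The paper's proof of the corollary breaks at its reduction to two antipodal vectors: once unpaired directions are discarded, every remaining hyperplane is covered by its antipode, and the pairs interact through their linear parts.

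This cannot be patched, because the proposition itself fails for $d_0\ge2$, $d_1\ge3$. Whenever $\sum_ib_iv_i=0$, one has $\sum_ib_if_{v_i}=\sum_ib_if_{-v_i}$. Concretely, take $d_0=2$, $d_1=3$, $u=((e_1,1),(e_2,1),(-n,\sqrt2))$ and $u'=((-e_1,1),(-e_2,1),(n,\sqrt2))$. Both lie in $U_0$, and both map to $\max(x_1,0)+\max(x_2,0)+\max(-x_1-x_2,0)$. They are not in the same $\Sigma_3$-orbit: in your notation $\beta_{e_1}=1$, and $v_1=e_1$ occurs among no $w_j$. Adding identical extra neurons with new directions to both points gives counterexamples for larger $d_1$, and embedding the plane handles larger $d_0$. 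So $U_0$ is not even weakly honest in general.

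What survives is the case in which the pair representatives occurring in $T$ are linearly independent. This is automatic for $d_1\le2$: there are at most four directions, hence at most two pairs, lying on distinct lines. That covers the architecture $(2,2,1)$ of \Cref{ex:not-schei}. In that regime your grouping, pigeonhole and freeness steps go through verbatim. Beyond it, $U_0$ has to be shrunk to remove such sign-flip symmetries, and honesty of the smaller set needs a new argument.
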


\begin{proof}
     Since $U_0$ is in the complement of $\Delta \times \R^{d_1}$, the action of $\Sigma_{d_1}$ is free (thus the map $\rho \vert_U$ is injective), and it is enough to prove that $U_0$ is honest. Let $(\mathbf{v},b)\in U_0$ and $(\mathbf{w},c) \in \R^M$ such that $\varphi(\mathbf{v},b)=\varphi(\mathbf{w},c)$. This implies we have the following equality
    \[
        \sum_{i=1}^{d_1} b_i f_{v_i} (x) = \sum_{i=1}^{d_1} c_i f_{w_i} (x)
    \]
    for every $x \in \R^{d_0}$. If we set $v_{i}:=w_{i-d_1}$ and $b_i=-c_{i-d_1}$ for every $i=d_1+1,\dots,2d_1$, we can write the previous equality as 
    \[
        \sum_{i=1}^{2d_1} b_i f_{v_i} = 0.
    \]
    Setting $J_i:=\{ 1\leq j \leq 2d_1  \vert \; n_i=n_j \}$, \Cref{cor:high-non-linearity} gives us that $\sum_{j \in J_i} b_j = 0$ for every $i=1,\dots,2d_1$. Notice that since $(\mathbf{v},b) \notin \Delta \times \R^{d_1}$, we have that $J_i \cap [1,d_1]=\{i\}$ for every $i\leq d_1$. Furthermore, $J_i \cap [d_1+1, 2d_1] \neq \emptyset$ for every $i\leq d_1$ because $(\mathbf{v},b) \notin S^{d_1} \times D$. By construction
    \[
        \bigcup_{i=1}^{d_1} (J_i \cap [d_1+1, 2d_1] ) = \{d_1+1,\dots, 2d_1\}
    \]
    which implies that the set $J_i \cap [d_1+1, 2d_1]$ consists of exactly one element $j_i$ for every $i=1,\dots,d_1$. The association $i \mapsto j_i-d_1$ defined a permutation $\sigma$ of the set $\{1,\dots,d_1\}$ and by construction $v_i=w_{\sigma(i)}$ and $b_i=c_{\sigma(i)}$. This concludes the proof. 
\end{proof}

As a consequence, we can construct the counterexample anticipated in \Cref{sec:semi-algebraicity}.

\begin{example}\label{ex:not-schei}
    Consider a shallow feed-forward ReLU neural network with architecture $(2,2,1)$. We want to show that the morphism $\varphi: (S^1 \times \R)^2 \rightarrow C^0(\R^2)$ does not verify the equivalent conditions in \Cref{theo:schie}. By contradiction, suppose that it exists a closed subset $K \subset  (S^1 \times \R)^2$ as in \Cref{theo:schie}. Consider now any convergent sequence of unit vectors $v_n \rightarrow v$ such that $n(v_n -v) \rightarrow 0$ and $v_n \neq v$ for any $n$. Moreover, consider the sequence of piecewise-linear functions $g_n(x):=n(f_{v_n}-f_v)$ which is clearly contained in $\im \,\varphi$. Then $g_n  \rightarrow 0$, since $$\abs{n(f_{v_n}(x)-f_v(x))}\leq\abs{\sprod{n(v_n-v),x}},$$ indeed, the inequality is clear on the sets where $f_{v_n}(x)=\sprod{v_n,x}$ and $f_v(x) = \sprod{v,x}$ and where they both are zero, and on the sets on which exactly one of the functions is zero, the signs of $\sprod{v,x}$ and $\sprod{v_n,x}$ are different, whence re-introducing the term killed by ReLU will only make the expression bigger.
    Now, because $v_n \neq v$, we know that $g_n \in \varphi(U_0)$, therefore by \Cref{prop:strongly-honest} the fibers $\varphi^{-1}(g_n)$ consists of two elements, namely $((v_n,n),(v,-n))$ and $((v,-n),(v_n,n))$. Since the restriction of $\varphi$ to $K$ is surjective and proper on $\im  \,\varphi$, we know that we should be able to construct a sequence which converges in $(S^1 \times \R)^2$ and maps to $g_n$, up to choosing for every $n$ one of the two elements of fiber $\varphi^{-1}(g_n)$. This is impossible, thus such a closed subset $K$ cannot exists.
\end{example}
Finally, we conclude with the answer of \Cref{conj:max-open} in the shallows case. Notice that as we already explained in \Cref{rem:weak-vs-honest}, the same statement is not true for weakly honest subsets.

\begin{proposition}\label{prop:shallow-max-honest}
 The Zariski open $U_0$ is maximal among honest subsets. 
\end{proposition}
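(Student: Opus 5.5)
The plan is to show that every point of the complement $\cP \setminus U_0$ violates the honesty condition pointwise. By \Cref{rem:set-theo}, a $\Sigma_{d_1}$-invariant subset $U$ is honest exactly when, for every $u \in U$, the map $\rho_u\colon \Sigma_{d_1} \to \varphi^{-1}(\varphi(u))$ is surjective. In other words, the whole fiber of $\varphi$ through $u$ must be the $\Sigma_{d_1}$-orbit of $u$. It therefore suffices to prove that for every $u=(\mathbf{v},b) \notin U_0$ the fiber $\varphi^{-1}(\varphi(u))$ strictly contains the orbit $\Sigma_{d_1}\cdot u$. Then no honest subset can contain $u$, so every honest subset lies in $U_0$. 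Combined with \Cref{prop:strongly-honest}, which says $U_0$ itself is (strongly) honest, this gives that $U_0$ is the maximal honest subset. Since the orbit has at most $d_1!$ elements, in most cases I will aim to exhibit an infinite family inside the fiber.

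\emph{Case 1: $u \in \Delta \times \R^{d_1}$.} Here $v_i = v_j$ for some $i \neq j$. For every $t \in \R$, replace $(b_i,b_j)$ by $(b_i + t, b_j - t)$ and keep all other coordinates fixed. Since $f_{v_i} = f_{v_j}$, the function $\varphi$ does not change. This yields a one-parameter family of distinct points in $\varphi^{-1}(\varphi(u))$, which cannot lie in the finite orbit. This case needs no assumption on $d_0$.

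\emph{Case 2: $u \in S^{d_1} \times D$ with $u \notin \Delta \times \R^{d_1}$.} Here $b_i = 0$ for some $i$, so the term $b_i f_{v_i}$ vanishes.
\begin{itemize}
\item If $d_1 \geq 2$, pick $k \neq i$ and move to the point where $v_i$ is replaced by $v_k$. This point lies in the same fiber. From it, the splitting trick of Case 1, $(b_k - t, t)$ on the coordinates $k,i$, produces infinitely many points of the fiber.
\item Alternatively, when $d_0 \geq 2$, one can simply let $v_i$ range over the infinite sphere $S$.
\end{itemize}
The only remaining situation is $d_1 = 1$ and $b = 0$. Then the fiber is $S \times \{0\}$, which has at least two points even when $d_0 = 1$, while the orbit under the trivial group $\Sigma_1$ is the single point $u$. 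So the fiber again strictly contains the orbit.

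The main obstacle is bookkeeping rather than substance. One must make sure that the degenerate low-dimensional architectures ($d_0 = 1$, where $S = \{\pm 1\}$ is finite, and $d_1 = 1$) are covered. One must also check that the points produced really are distinct points of $\cP = (S \times \R)^{d_1}$ and not secretly elements of the orbit. The finiteness of $\Sigma_{d_1}$ handles this whenever the family is infinite, and the remaining case is handled by the explicit count above. Finally, I will remark that the argument uses only the pointwise characterization of honesty and not openness, so $U_0$ is maximal even among arbitrary honest subsets, not just honest opens.
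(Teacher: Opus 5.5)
Your proposal is correct and follows the paper's route: you show that the fiber through each point of $(\Delta\times\R^{d_1})\cup(S^{d_1}\times D)$ is larger than its $\Sigma_{d_1}$-orbit, and you use the same coefficient-splitting family $b_i'+b_j'=b_i+b_j$ on $\Delta\times\R^{d_1}$. You go further than the paper by actually carrying out the $S^{d_1}\times D$ case it leaves to the reader, and your criterion ``the fiber strictly contains the orbit'' (rather than the paper's ``the fiber is infinite'') correctly covers the degenerate architecture $d_0=d_1=1$, where the fiber over the zero function is the finite set $S\times\{0\}=\{(\pm1,0)\}$.
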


\begin{proof}
     It is enough to show that for every element $u$ of $(\Delta \times \R^{d_1}) \cup (S^{d_1} \times D)$ the subset $\varphi^{-1}\varphi(u)$ is not finite. More precisely, let $U$ be any honest subset of $\cP$ and suppose that $U\cap (\Delta \times \R^{d_1}) \neq \emptyset$. If $u$ is any point in the intersection, then $\varphi^{-1}\varphi(u) \subset U$ since $U$ is honest. As already mentioned, if we prove that $\varphi^{-1}\varphi(u)$ is not finite, we get the contradiction (since $\Pr(\d)$ is finite). By construction, $u=(\mathbf{v},b) \in S^{d_1}\times \R^{d_1}$ such that there exists two distinct indexes $i\neq j \in [d_1]$ such that $v_i=v_j$. Consider now the elements $(\mathbf{v}, b')$ such that $b'_h=b_h$ for $h\neq i,j$ and $b'_i+b'_j=b_i+b_j$: by construction $\varphi(\mathbf{v}, b')=\varphi(\mathbf{v}, b)$ therefore $(\mathbf{v}, b') \in \varphi^{-1}\varphi(u)$. Since the vector $b'$ varies in a $1$-dimensional affine space inside $\R^{d_1}$, we get that $U$ must be contained in the complement of $\Delta \times \R^{d_1}$. We leave the case of $S^{d_1} \times D$ to the interested reader.
\end{proof}

{\small 
\bibliography{literature}
\bibliographystyle{alpha} 
}

\vspace{0.5cm}
\end{document}